\documentclass[10pt,a4paper,reqno,oneside]{amsart}
\usepackage{amssymb}
\usepackage[english]{babel}
\usepackage[T1]{fontenc}
\usepackage{xcolor}

\usepackage{multirow}

\usepackage[colorlinks]{hyperref}
\usepackage{url}
\usepackage{hyperref}
\definecolor{ForestGreen}{rgb}{0.1,0.6,0.05}
\definecolor{EgyptBlue}{rgb}{0.063,0.1,0.6}
\definecolor{RipeOlive}{HTML}{556B2F}
\hypersetup{
	colorlinks=true,
	linkcolor=EgyptBlue,         
	citecolor=ForestGreen,
	urlcolor=olive
}
\usepackage[hyperpageref]{backref}

\usepackage{enumitem}

\DeclareMathOperator*{\essinf}{ess\,inf}

\usepackage[left=3cm,right=3cm,
    top=3cm,bottom=3cm,bindingoffset=0cm]{geometry}

\setlength{\parindent}{15pt}

\setlength{\parskip}{0.05cm plus 0.05cm minus 0.05cm}

\newtheorem{theorem}{Theorem}

\newtheorem{lemma}[theorem]{Lemma}
\newtheorem{corollary}[theorem]{Corollary}
\theoremstyle{definition}
\newtheorem{remark}[theorem]{Remark}

\numberwithin{equation}{section}
\numberwithin{theorem}{section}

\begin{document}

\title[On maximum and comparison principles]{On maximum and comparison principles for parabolic problems with the $p$-Laplacian}

\author{Vladimir Bobkov}
\address{Department of Mathematics and NTIS, Faculty of Applied Sciences, University of West Bohemia, Univerzitn\'i 8, 306 14 Plze\v{n}, Czech Republic}
\email{bobkov@kma.zcu.cz}
\thanks{}

\author{Peter Tak\'a\v{c}}
\address{Institut f\"ur Mathematik, Universit\"at Rostock, Universit\"atsplatz 1, D-18055 Rostock, Germany}
\email{peter.takac@uni-rostock.de}
\thanks{}

\begin{abstract}
We investigate strong and weak versions of maximum and comparison principles for a class of quasilinear parabolic equations with the $p$-Laplacian
$$
\partial_t u - \Delta_p u = \lambda |u|^{p-2} u + f(x,t)
$$
under zero boundary and nonnegative initial conditions on a bounded cylindrical domain $\Omega \times (0, T)$, $\lambda \in \mathbb{R}$, and $f \in L^\infty(\Omega \times (0, T))$. 
Several related counterexamples are given.
\end{abstract}
\subjclass[2010]{
	35B50, 
	35B51, 
	35B30, 
	35K92. 
	}
\keywords{$p$-Laplacian, parabolic equation, fast diffusion, slow diffusion, maximum principle, comparison principle, uniqueness.}

\maketitle

\section{Introduction}
Let $\Omega_T \stackrel{\textrm{def}}{=} \Omega \times (0,T)$ be a parabolic cylinder, where $\Omega \subset \mathbb{R}^N$ ($N \geq 1$) is a bounded domain with Lipschitz boundary $\partial \Omega$, $T \in (0, +\infty)$, and let  $\partial \Omega_T \stackrel{\textrm{def}}{=} \partial \Omega \times (0, T)$ be the corresponding lateral surface.
We consider the following initial-boundary value problem:
\begin{equation}
\label{eq:P}
\tag{$\mathcal{P}$}
\left\{
\begin{aligned}
\partial_t u - \Delta_p u &\,= \lambda |u|^{p-2} u  + f(x,t) , & &(x,t) \in \Omega_T, \\[0.4em]
u(x, 0) &\,= u_0(x), & &x \in \Omega,\\[0.4em]
u(x,t) &\,= 0, & &(x,t) \in \partial \Omega_T.
\end{aligned}
\quad
\right.
\end{equation}
Here,
\begin{math}
\Delta_p u\stackrel{\textrm{def}}{=}
\mbox{div}(|\nabla_x u|^{p-2} \nabla_x u)
\end{math}
is the $p$-Laplacian with the spatial gradient $\nabla_x u$, $p > 1$, and 
$\lambda \in \mathbb{R}$. 
Dealing with \eqref{eq:P}, we assume that the source function
$f\in L^\infty(\Omega_T)$ and initial function
$u_0 \in W_0^{1,p}(\Omega) \cap L^2(\Omega)$, where
$W_0^{1,p}(\Omega)$ is the standard Sobolev space.
Below, by $\lambda_1$ we denote the first positive eigenvalue of
the $p$-Laplacian in $\Omega$ under
the zero Dirichlet boundary conditions, i.e.,
$$
\lambda_1 
=
\inf\left\{
\int_{\Omega} |\nabla u|^p \, dx:~ u \in W_0^{1,p}(\Omega) \text{ with } \int_\Omega |u|^p \, dx = 1
\right\}.
$$

In this article, we study qualitative properties of weak solutions to problem \eqref{eq:P}, such as maximum and comparison principles. 
It is well-known (see, e.g., \cite{ProtterWeinberger} or \cite{smaller}) that in the linear case $p=2$ any (classical) solution $u$ of \eqref{eq:P} satisfies the Weak Maximum Principle (WMP for short), that is, the assumptions $u_0 \geq 0$ in $\Omega$ and $f \geq 0$ in $\Omega_T$ imply that $u \geq 0$ in $\Omega_T$. 
Moreover, the additional assumption $u(x_0, t_0) = 0$ for some $(x_0, t_0) \in \Omega_T$ yields $u \equiv 0$ in $\Omega_{t_0} \stackrel{\textrm{def}}{=} \Omega \times (0,t_0)$, i.e., the Strong Maximum Principle (SMP) holds. 
At the same time, analogous principles for $p \neq 2$ cannot be satisfied, in general, without additional assumptions on the parameter $\lambda$, initial and source data; they are significantly different for the \textit{fast diffusion} (singular case, $p < 2$) and \textit{slow diffusion} (degenerate case, $p>2$).

Consider, for instance, the following shifted Barenblatt solution of the equation $\partial_t u - \Delta_p u = 0$  for $p>2$ \cite{Bar1,kamvaz}:
\begin{equation}\label{bar}
u(x,t) = \frac{1}{(t+\alpha)^k} 
\left(
C - \frac{p-2}{p} \left(\frac{k}{N}\right)^{\frac{1}{p-1}}
\left(\frac{|x|}{(t+\alpha)^{k/N}}\right)^{\frac{p}{p-1}}
\right)_+^{\frac{p-1}{p-2}},
\end{equation}
where $\alpha > 0$, $C>0$, and $k = (p-2+p/N)^{-1}$. It is not hard to see that \eqref{bar} satisfies zero boundary and nonnegative initial conditions on some $\Omega_T$; however, it exhibits a finite speed of propagation, and, in consequence, the SMP does not hold.

On the other hand, for $p<2$, problem \eqref{eq:P} possesses a finite time extinction phenomenon (also known as complete quenching \cite{dao}), that is, there are nonnegative solutions of \eqref{eq:P} which vanish over $\Omega \times \{t_0\}$ for some $t_0 \in (0, T)$ and are strictly positive in $\Omega_{t_0}$. An explicit example of such solution to $\partial_t u - \Delta_p u = 0$ is given, e.g., in \cite[pp.\ 64-65]{dibenedetto2011} for $N=1$ as follows:
\begin{equation}\label{finext}
u(x,t) = \left(
t_0 - (2-p) t
\right)_+^{\frac{1}{2-p}}v(x),
\end{equation}
where $v$ satisfies the equation $-(|v'|^{p-2}v')' = v$ under boundary conditions $v(-1) = v(1) = 0$.

In \cite{BobkovTakac}, we  investigated the SMP for problem \eqref{eq:P} in the case $\lambda \leq 0$ and found suitable versions thereof for $p<2$ and $p>2$ (see also Theorems \ref{SMP} and \ref{SMP2} below, and \cite{nazaret}).
Moreover, we gave several counterexamples alternative to \eqref{bar} and \eqref{finext}.
However, the case $\lambda > 0$ has not been treated in details in \cite{BobkovTakac}.

On the other hand, consider two problems of the type \eqref{eq:P} with ordered initial data $u_0 \leq v_0$ in $\Omega$ and ordered source functions $f \leq g$ in $\Omega_T$. If the corresponding weak solutions $u$ and $v$ are also ordered, i.e., $u \leq v$ in $\Omega_T$, we say that the Weak Comparison Principle (WCP) is satisfied.
Moreover, if the additional assumption $u(x_0, t_0) = v(x_0, t_0)$ for some $(x_0,t_0) \in \Omega_T$ yields $u \equiv v$ in $\Omega_{t_0}$, then 
the Strong Comparison Principle (SCP) holds.
For the linear case $p=2$ the WCP and SCP readily follow from the WMP and SMP, respectively, by considering the difference $v-u$. However, the $p$-Laplacian being nonlinear, it does not allow to use the same method for the general case $p \neq 2$. Hence different arguments have to be employed.
Furthermore, as in the case of maximum principles, the WCP and SCP cannot be satisfied in the general forms for $p \neq 2$, and their appropriate versions crucially depend on $\lambda$, data, and the choice of $p<2$ or $p>2$.

In the present article we concentrate on the SMP for the case $\lambda > 0$ and the WCP and SCP for $\lambda \geq 0$. 
Validity of the Hopf maximum principle (boundary point lemma) is also discussed.
Precise results are formulated in the next section. 
We remark that the right-hand side of \eqref{eq:P} is a model case of more general nonlinearities (cf.\ \cite[Chapter 2]{pucciserrin2007}) and the results we have obtained are typical for general settings.

In Table \ref{tab1} we collect some known information on availability of maximum and comparison principles for \eqref{eq:P} and indicate several open problems. 
The facts without citations are proved in the present article. 
Note that the most of the information on maximum and comparison principles together with corresponding counterexamples is known for the case $\lambda = 0$. 
The WCP (and consequently WMP) in the case $p>1$ and $\lambda \leq 0$ follows, in principle, from the monotonicity of the operator $-\Delta_p$ and term $-\lambda|u|^{p-2}u$, and we refer here, e.g., to \cite[Lemma 3.1]{kilplind} for the case $\lambda = 0$, and to \cite[Lemma 4.9]{takac2010} for $\lambda \leq 0$.
Counterexamples to the SCP in the case $\lambda = 0$ follow from the inspection of solutions of the forms \eqref{bar} and \eqref{finext}. 

Finally, let us remark that, among other qualitative properties of solutions for problem \eqref{eq:P}, wide literature is devoted to Harnack-type inequalities, see \cite{dibenedetto1993, dibenedetto2011}. A version of the antimaximum principle for \eqref{eq:P} has been found in \cite{takac2010}.

\begin{table}[h!]
	\label{tab1}
	\renewcommand{\arraystretch}{1.6} 
	\renewcommand{\tabcolsep}{0.2cm}   
	\begin{center}
		\begin{tabular}{|c|c||c|c||c|c|}
			\hline
			\multicolumn{2}{|c||}{}		& WMP & SMP & WCP & SCP 		\\
			\hline
			\hline
			\multirow{2}{*}{$\lambda \leq 0$} & $p<2$	& $+$   \cite{kilplind} & $-$ \cite{dibenedetto1993} / $\pm$  \cite{nazaret} & $+$  \cite{kilplind} & $-$  \cite{dibenedetto1993} / $\pm$ / ? 	\\
			\cline{2-6}
			& $p>2$	& $+$  \cite{kilplind} & $-$ \cite{Bar1} / $\pm$  \cite{BobkovTakac} & $+$  \cite{kilplind} & $-$  \cite{Bar1} / $\pm$	\\
			\hline
			\hline
			\multirow{2}{*}{$\lambda \in (0, \lambda_1]$} & $p<2$	& $+$  \cite{BobkovTakac} & $-$ \cite{BobkovTakac} /  $\pm$ & $-$ / ? & $-$ / ?	\\
			\cline{2-6}
			& $p>2$	& $+$  \cite{BobkovTakac} & $-$ \cite{BobkovTakac} / $\pm$ & $+$ & $-$  \cite{BobkovTakac} / $\pm$	\\
			\hline
			\hline
			\multirow{2}{*}{$\lambda > \lambda_1$} & $p<2$	& $-$ & $-$ & $-$ & $-$	\\
			\cline{2-6}
			& $p>2$ & $+$ & $-$ \cite{BobkovTakac} / $\pm$ & $+$ & $-$  \cite{BobkovTakac}  / $\pm$	\\
			\hline
			\hline
			$\lambda \in \mathbb{R}$ & $p>1$	& $+$  \cite{ProtterWeinberger} & $+$  \cite{ProtterWeinberger} & $+$  \cite{ProtterWeinberger} & $+$  \cite{ProtterWeinberger}	\\
			\hline
		\end{tabular}
	\end{center}
	\hfil
	\smallskip
	\caption{Status of the maximum and comparison principles for problem \eqref{eq:P}.	
		'$+$' - the principle is satisfied; 
		'$-$' - a counterexample is known;
		'$\pm$' - the principle is satisfied under additional assumptions; 
		'$?$' - no satisfactory information.}
\end{table}

\section{Main results}\label{sec:mainres}
In this section we collect our main results.
We recall that all proofs for the linear case $p=2$ are well-known even under more general assumptions on a domain and parabolic operator, see, e.g.,  \cite{ProtterWeinberger} or \cite{smaller}. We include the case $p=2$ in our formulations for the sake of completeness.
For a basic treatment of the nonlinear case $p \neq 2$, including a brief derivation of problem \eqref{eq:P}, we refer to the classical work by {\sc D\'iaz} and {\sc de Th\'elin} \cite{diazdethelin}.

Let $f \in L^\infty(\Omega_T)$, $u_0 \in W^{1,p}(\Omega) \cap L^2(\Omega)$, and let $h$ be a continuous function on $\overline{\partial \Omega_T} = (\overline{\Omega} \times \{0\}) \cup (\partial \Omega \times [0, T])$.
Under a weak solution of the problem
\begin{equation}
\label{eq:P1}
\left\{
\begin{aligned}
\partial_t u - \Delta_p u &\,= \lambda |u|^{p-2} u  + f(x,t) , & &(x,t) \in \Omega_T, \\[0.4em]
u(x, 0) &\,= u_0(x), & &x \in \Omega,\\[0.4em]
u(x,t) &\,= h(x,t), & &(x,t) \in \partial \Omega_T
\end{aligned}
\quad
\right.
\end{equation} 
we mean a Lebesgue-measurable function $u: \Omega_T \to \mathbb{R}$ satisfying
$$
u \in C\left( [0,T] \to  L^2(\Omega) \right) \cap 
L^p\left( (0,T) \to W^{1,p}(\Omega) \right),
$$
and 
\begin{align*}
\int_\Omega u \varphi \, dx \Big|_{t=0}^{t=\tau} &+ \int_{\Omega_\tau}\left( -u 
\,\partial_t \varphi
+ |\nabla_x u|^{p-2} \left<\nabla_x u, \nabla_x \varphi \right> \right) \, dx  \, dt   \\
&= \lambda \int_{\Omega_\tau} |u|^{p-2} u \varphi \, dx \, dt + \int_{\Omega_\tau} f(x,t) \varphi \, dx \, dt
\end{align*}
for every $\tau \in (0, T]$ and for all test functions
$$
\varphi \in W^{1,2} \left( (0,\tau) \to L^2(\Omega) \right) \cap L^p \left( (0, \tau) \to W^{1,p}_0(\Omega) \right).
$$
The boundary condition $u = h$ on $\partial\Omega_T$ holds in the sense of traces of functions $u(\cdot,t)|_{\partial \Omega}$ in $W^{1,p}(\Omega)$ for a.e.\ $t \in (0, T)$.
As usual, $\left<\cdot, \cdot \right>$ denotes the inner product in $\mathbb{R}^N$.

We start with a variant of the Weak Comparison Principle in a subdomain $E \subseteq \Omega_T$.
Consider the following two problems:
\begin{align}
\label{com1}
\partial_t u - \Delta_p u = \lambda |u|^{p-2} u + f \mbox{ in } \Omega_T,~~ 
&u(x, 0) = u_0  \mbox{ in } \Omega,~~
u = h_1 \mbox{ on } \partial \Omega_T,
\\[0.4em]
\label{com2}
\partial_t v - \Delta_p v = \lambda |v|^{p-2} v + g \mbox{ in } \Omega_T,~~ 
&v(x, 0) = v_0  \,\mbox{ in } \Omega,~~
v = h_2 \mbox{ on } \partial \Omega_T.
\end{align}
We assume $f, g \in L^\infty(\Omega)$, $u_0, v_0 \in W^{1,p}(\Omega) \cap L^2(\Omega)$, and $h_1, h_2$ are continuous on $\overline{\partial \Omega_T}$.
\begin{theorem}[WCP]
	\label{WCP2}
	Let $E \subseteq \Omega_T$ be a subdomain.
	Assume that $f \leq g $ a.e.\ in $E$ and let $u, v$ be weak solutions of problems \eqref{com1}, \eqref{com2}, respectively.
	Finally, assume either of the following two conditions:
	\begin{enumerate}[label={\rm(\roman*)}]
		\item\label{WCP2:i} $p > 1$ and $\lambda \leq 0$;
		\item\label{WCP2:ii} $p > 2$, $\lambda > 0$, and $u,v \in L^\infty(E)$.
	\end{enumerate}
	If $u \leq v$ a.e.\ in $(\overline{\Omega} \times [0,T)) \setminus E$, then $u \leq v$ holds throughout $E$.
\end{theorem}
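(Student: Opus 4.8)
The plan is to run the classical energy (Stampacchia) argument: set $w = u - v$ and show that $w_+ \stackrel{\textrm{def}}{=} \max\{w,0\}$ vanishes identically in $E$. First I would read the hypothesis $u \le v$ a.e.\ in $(\overline{\Omega} \times [0,T)) \setminus E$ as saying precisely that $w_+ = 0$ off $E$. Since $E \subseteq \Omega_T$, the lateral surface $\partial\Omega \times (0,T)$ and the initial slice $\overline{\Omega} \times \{0\}$ lie in the complement of $E$; hence $w_+(\cdot,0) = (u_0 - v_0)_+ = 0$ and the trace of $w_+(\cdot,t)$ on $\partial\Omega$ vanishes for a.e.\ $t$. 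Consequently $w_+ \in L^p((0,\tau) \to W_0^{1,p}(\Omega))$ and is supported in $\overline{E}$, making it the natural candidate test function.

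Next I would subtract the two weak formulations and insert $\varphi = w_+$. The immediate obstruction is that $w_+$ need not possess the time regularity $W^{1,2}((0,\tau) \to L^2(\Omega))$ required of admissible test functions, since $\partial_t w$ only lives in a dual space. This is the main technical point, and I would circumvent it in the standard way by replacing $w_+$ with its Steklov time-average, testing, and then passing to the limit. This produces the energy identity
\begin{equation*}
\tfrac{1}{2}\|w_+(\tau)\|_{L^2(\Omega)}^2 + \int_{\Omega_\tau} \langle |\nabla_x u|^{p-2}\nabla_x u - |\nabla_x v|^{p-2}\nabla_x v,\, \nabla_x w_+ \rangle \, dx\, dt = \lambda \int_{\Omega_\tau} \bigl(|u|^{p-2}u - |v|^{p-2}v\bigr)\, w_+ \, dx\, dt + \int_{\Omega_\tau} (f-g)\, w_+ \, dx\, dt,
\end{equation*}
where the boundary term at $t=0$ drops out because $w_+(\cdot,0)=0$, and all integrals effectively reduce to $E$ because both $w_+$ and $\nabla_x w_+$ vanish off $E$.

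I would then exploit the sign of each term. By the elementary monotonicity inequality $\langle |a|^{p-2}a - |b|^{p-2}b,\, a-b\rangle \ge 0$ for the $p$-Laplacian vector field, the diffusion integral on the left is nonnegative; and since $f \le g$ a.e.\ in $E$ while $w_+ \ge 0$, the last term on the right is nonpositive. It remains to control the reaction term. Under hypothesis \ref{WCP2:i}, monotonicity of $s \mapsto |s|^{p-2}s$ gives $(|u|^{p-2}u - |v|^{p-2}v)\,w_+ \ge 0$ on $\{u>v\}$, so with $\lambda \le 0$ the whole right-hand side is nonpositive; discarding the nonnegative diffusion term forces $\|w_+(\tau)\|_{L^2(\Omega)} = 0$ for every $\tau$, which is the claim. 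Under hypothesis \ref{WCP2:ii} I would instead use that $u,v \in L^\infty(E)$, say $|u|,|v| \le M$ on $E$: since $p>2$ the map $s \mapsto |s|^{p-2}s$ is locally Lipschitz, whence $0 \le |u|^{p-2}u - |v|^{p-2}v \le (p-1)M^{p-2}\,w_+$ on $\{u>v\}$, giving $\lambda \int_{\Omega_\tau}(|u|^{p-2}u - |v|^{p-2}v)\,w_+ \, dx\, dt \le \lambda (p-1)M^{p-2}\int_0^\tau \|w_+(t)\|_{L^2(\Omega)}^2\, dt$.

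Dropping the nonnegative diffusion term then yields $\|w_+(\tau)\|_{L^2(\Omega)}^2 \le 2\lambda (p-1)M^{p-2}\int_0^\tau \|w_+(t)\|_{L^2(\Omega)}^2 \, dt$ together with $\|w_+(0)\|_{L^2(\Omega)} = 0$, so Gr\"onwall's inequality forces $w_+ \equiv 0$ in $E$, completing the argument. I expect the genuine difficulty to reside entirely in the time-regularity/Steklov step that legitimizes the energy identity; everything afterward is a matter of tracking signs. Finally, the restriction $p>2$ in \ref{WCP2:ii} is exactly what makes the local Lipschitz estimate, and hence the Gr\"onwall closure, available: its failure for $p<2$, where $s \mapsto |s|^{p-2}s$ has unbounded derivative near the origin, is consistent with the counterexamples recorded for $p<2$, $\lambda>0$ in Table \ref{tab1}.
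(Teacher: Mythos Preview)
Your proposal is correct and follows essentially the same route as the paper: Steklov-averaging to legitimize $w_+=(u-v)_+$ as a test function, passage to the limit to obtain the energy inequality, then sign analysis of the diffusion and source terms leaving only the reaction term, which is handled directly when $\lambda\le 0$ and via a local Lipschitz bound plus Gr\"onwall when $p>2$, $\lambda>0$, $u,v\in L^\infty(E)$. The only cosmetic difference is that the paper quotes the inequality $(|a|^{p-2}a-|b|^{p-2}b)(a-b)\le c_1(|a|+|b|)^{p-2}|a-b|^2$ in place of your explicit derivative bound $(p-1)M^{p-2}$, but these are equivalent for bounded $u,v$ and $p>2$.
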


\begin{remark}
	We point out that the initial and boundary conditions are
	included in our hypothesis
	$u\leq v$ a.e.\ in
	\begin{math}
	E^{\mathrm{c}} = (\overline{\Omega} \times [0,T)) \setminus E ,
	\end{math}
	the complement of
	$E\subset \Omega_T = \Omega\times (0,T)$ in the set
	$\overline{\Omega} \times [0,T)$,
	so that $E^{\mathrm{c}}$ contains both sets,
	$\overline{\Omega} \times \{ 0\}$ and
	$\partial\Omega_T = \partial\Omega\times (0,T)$.
	Consequently, the initial conditions are prescribed on
	$\overline{\Omega} \times \{ 0\}\subset E^{\mathrm{c}}$
	whereas boundary conditions are prescribed on
	$\partial\Omega_T\subset E^{\mathrm{c}}$.
	The role played by the set $E\subset \Omega_T$ is to deal
	with the case of the functions
	$f,g\in L^\infty(\Omega)$ satisfying $f\leq g$ a.e.\
	only in some subdomain $E\subset \Omega_T$, i.e.,
	only {\em locally\/}.
\end{remark}

\begin{remark}\label{rem:wcp}
	Evidently, the WCP implies the WMP by taking $u \equiv 0$ as a solution to \eqref{com1} under the trivial initial and boundary data.
	Moreover, we know that the WMP for \eqref{eq:P} is also satisfied in the case  $p < 2$ and $\lambda \in (0, \lambda_1]$ (see \cite[Theorem 2.4]{BobkovTakac}). However, to the best of our knowledge, availability of the WCP for $p < 2$ and $\lambda \in (0, \lambda_1]$ is still an open problem for \textit{nonnegative} source and initial data.
	At the same time, in Section \ref{sec:nonunieq} below we present a counterexample to the WCP for $p<2$, $\lambda > 0$, and appropriately chosen sign-changing source functions. 
	Moreover, for $p<2$ and $\lambda > \lambda_1$ the WCP is violated even under the trivial source and initial data as it is also shown in Section \ref{sec:nonunieq}.
\end{remark}

Now we state the Strong Maximum Principle for problem \eqref{eq:P}.
\begin{theorem}[SMP]
	\label{SMP}
	Assume that $f \in L^\infty(\Omega_T)$, $f \geq 0$ a.e.\ in $\Omega_T$ and $u_0 \in W_0^{1,p}(\Omega) \cap L^2(\Omega)$, $u_0 \geq 0$ a.e.\ in $\Omega$. 
	Let $u \in C^{1,0}(\Omega_T)$ be a weak solution of \eqref{eq:P}. 
	Then the following assertions are valid:
	\begin{enumerate}[label={\rm(\roman*)}]
		\item\label{SMP:i} If $p < 2$, $\lambda \leq \lambda_1$, and $u_0 \not\equiv 0$ in $\Omega$, then there exists $\bar{t} \in (0, T]$ such that $u > 0$ in $\Omega_{\bar{t}}$.
		\item\label{SMP:ii} If $p = 2$, $\lambda \in \mathbb{R}$, and $u_0 \not\equiv 0$ in $\Omega$, then $u > 0$ in $\Omega_T$.
		\item\label{SMP:iii} If $p > 2$, $\lambda \leq 0$, and $\essinf\limits_{\mathcal{K}} u_0 > 0$ for any compact subset $\mathcal{K}$ of $\Omega$, then $u > 0$ in $\Omega_T$.
		\item\label{SMP:iv} If $p > 2$, $\lambda > 0$, and $\essinf\limits_{\mathcal{K}} u_0 > 0$ for any compact subset $\mathcal{K}$ of $\Omega$, assume also $u \in L^\infty(\Omega_T)$. Then $u > 0$ in $\Omega_T$.
	\end{enumerate}
\end{theorem}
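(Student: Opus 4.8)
The plan is to treat the four cases by \emph{reducing} the reaction term $\lambda|u|^{p-2}u$ to an inhomogeneous right-hand side and then invoking maximum principles already available for $\lambda\le 0$. The unifying first step is the Weak Maximum Principle: under the stated hypotheses one has $u\ge 0$ a.e.\ in $\Omega_T$. For $p>2$, $\lambda\le 0$ this follows from Theorem~\ref{WCP2}\ref{WCP2:i} (comparing $u$ with the zero solution over $E=\Omega_T$); for $p>2$, $\lambda>0$ it follows from Theorem~\ref{WCP2}\ref{WCP2:ii} together with the assumed bound $u\in L^\infty(\Omega_T)$; and for $p<2$, $0<\lambda\le\lambda_1$ it follows from the WMP established in \cite{BobkovTakac}. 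I record that assertion \ref{SMP:iii} and the subcase $\lambda\le 0$ of \ref{SMP:i} were already proved in \cite{BobkovTakac}, so the genuinely new content is the passage to $\lambda>0$ in \ref{SMP:i} and \ref{SMP:iv}.

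For the linear case \ref{SMP:ii} I would not use this reduction but the exponential substitution $u=e^{\lambda t}w$. Since $\Delta(e^{\lambda t}w)=e^{\lambda t}\Delta w$, the function $w$ solves $\partial_t w-\Delta w=e^{-\lambda t}f$ with $w(\cdot,0)=u_0$ and zero lateral data; as $e^{-\lambda t}f\ge 0$ and $u_0\ge 0$, $u_0\not\equiv 0$, the classical strong maximum principle for the heat operator (see \cite{ProtterWeinberger}) gives $w>0$, hence $u>0$, throughout $\Omega_T$.

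The core of the argument is the remaining case $\lambda>0$. Once $u\ge 0$ is known, the reaction term becomes nonnegative, $\lambda|u|^{p-2}u=\lambda u^{p-1}\ge 0$, so I would set $\tilde f:=f+\lambda u^{p-1}$ and observe that the \emph{same} function $u$ is a weak solution of $\partial_t u-\Delta_p u=\tilde f$, i.e.\ of problem \eqref{eq:P} with the parameter $\lambda$ replaced by $0$ and source $\tilde f\ge 0$. It then remains to check that $\tilde f$ satisfies the integrability hypothesis under which the $\lambda=0$ strong maximum principle of \cite{BobkovTakac} applies, and to invoke that result: for $p>2$ assertion \ref{SMP:iii}, applied with $\lambda=0$ and source $\tilde f$, yields $u>0$ in all of $\Omega_T$ (giving \ref{SMP:iv}), while for $p<2$ the corresponding $\lambda=0$ principle yields positivity on a slab $\Omega_{\bar t}$ for some $\bar t\in(0,T]$ (giving \ref{SMP:i}).

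The main obstacle is precisely the integrability of the new source $\tilde f=f+\lambda u^{p-1}$, and this is where the structural hypotheses enter. For $p>2$ the assumption $u\in L^\infty(\Omega_T)$ forces $u^{p-1}\in L^\infty(\Omega_T)$, whence $\tilde f\in L^\infty(\Omega_T)$ and the $p>2$, $\lambda\le 0$ principle applies verbatim; this explains why the boundedness of $u$ cannot be dropped in \ref{SMP:iv}. For $p<2$ one has $p-1\in(0,1)$, so $u^{p-1}$ is controlled by $u$ on bounded sets; since $u\in C^{1,0}(\Omega_T)$ is continuous and the desired conclusion is only local in time, I would fix a small slab, secure $\tilde f\in L^\infty$ there (either from the continuity of $u$ on compact subsets combined with an exhaustion of $\Omega$, or from the known local boundedness of weak solutions), and then apply the local $\lambda=0$ result. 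Finally, the threshold $\lambda_1$ in \ref{SMP:i} is not used in the reduction itself but only to guarantee the initial step $u\ge 0$: for $p<2$ the WMP, and hence the whole scheme, breaks down once $\lambda>\lambda_1$, consistently with the counterexamples recorded in Table~\ref{tab1}.
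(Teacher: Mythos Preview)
Your reduction strategy is correct, and for assertion \ref{SMP:i} with $\lambda\in(0,\lambda_1]$ it coincides with what the paper does: once the WMP gives $u\ge 0$, the paper sets $\tilde f=\lambda|u|^{p-2}u+f\ge 0$ and feeds it back into the $\lambda\le 0$ argument. One caveat: the paper does not simply invoke \cite{BobkovTakac} as a black box for the $\lambda\le 0$ subcase of \ref{SMP:i}, because that reference required $u(x,\cdot)$ differentiable in $t$, whereas here only $u\in C^{1,0}(\Omega_T)$ is assumed. The paper therefore reworks the barrier $v(x,t)=\varepsilon(e^{-\alpha d(x,t)^2}-e^{-\alpha R^2})$ from scratch under the weaker regularity before performing the reduction. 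Your proposal glosses over this by recording \ref{SMP:iii} and the $\lambda\le 0$ part of \ref{SMP:i} as ``already proved''; that is acceptable as an outline, but you should be aware that the present paper is also upgrading those earlier results.

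For assertions \ref{SMP:iii} and \ref{SMP:iv} your route genuinely differs from the paper's. You treat \ref{SMP:iv} by absorbing $\lambda u^{p-1}$ into $\tilde f$ (using $u\in L^\infty$) and then applying the $\lambda=0$ instance of \ref{SMP:iii}. The paper instead handles \ref{SMP:iii} and \ref{SMP:iv} \emph{simultaneously} by constructing an explicit polynomial subsolution
\[
w(x,t)=C\,(R^2-|x|^2)^m(T-t),\qquad m\ge \tfrac{p}{p-2},
\]
on a ball $K_R\ni x_0$, verifying $\partial_t w-\Delta_p w-\lambda|w|^{p-2}w\le 0$ for $C>0$ small, and applying the WCP of Theorem~\ref{WCP2}. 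Your reduction is cleaner and modular (it explains transparently why $u\in L^\infty$ is needed), while the paper's direct barrier is self-contained and avoids any dependence on the earlier \cite{BobkovTakac} proof. Either approach is valid.
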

\begin{corollary}\label{SMPcor}
	Let $p<2$ and $\lambda \leq \lambda_1$. 
	Then the conclusion of {\rm Theorem \ref{SMP} \ref{SMP:i}} is equivalent to $u > 0$ in $\Omega_{\bar{t}(u)}$, where
	\begin{equation}\label{eq:max}
	\bar{t}(u) \stackrel{\mathrm{def}}{=} \max\{t \in (0, T]:~ u > 0 ~\mathrm{in}~ \Omega_t\} > 0.
	\end{equation}
	Moreover, $\bar{t}(u)$ coincides with the following value:
	\begin{equation*}
	t^*(u) \stackrel{\mathrm{def}}{=} \max\{t \in (0, T]:~ \forall s \in (0,t)~  \exists x_s \in \Omega \mathrm{\ such\ that\ } u(x_s,s) > 0\}.
	\end{equation*}
\end{corollary}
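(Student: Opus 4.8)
The plan is to separate the statement into its two assertions --- that the existential conclusion of Theorem \ref{SMP}\ref{SMP:i} is equivalent to strict positivity on the maximal cylinder $\Omega_{\bar t(u)}$, and that this maximal time equals $t^*(u)$ --- and to treat the second, harder, assertion by a restart of the Strong Maximum Principle at the critical time.

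First I would settle the well-definedness and the equivalence. Put $S = \{t \in (0,T] : u > 0 \text{ in } \Omega_t\}$. Since $\Omega_{t'} \subset \Omega_t$ for $t' < t$, the set $S$ is downward closed, hence an interval with left endpoint $0$. Because $\Omega_t = \Omega \times (0,t)$ uses the \emph{open} time interval, the supremum is attained: any $(x,s) \in \Omega_{\sup S}$ has $s < \sup S$, so it lies in some $\Omega_t$ with $t \in S$, whence $u(x,s) > 0$; thus $\sup S \in S$ and $\bar t(u) := \max S$ is well defined. The conclusion of Theorem \ref{SMP}\ref{SMP:i} is then literally the assertion $S \neq \emptyset$, which is equivalent to $\bar t(u) > 0$ together with $u > 0$ in $\Omega_{\bar t(u)}$. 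The same downward-closedness and open-interval argument applied to $S^* = \{t : \forall s \in (0,t)\ \exists x_s,\ u(x_s,s) > 0\}$ shows $t^*(u) = \max S^*$ is well defined, and positivity everywhere on $\Omega_{\bar t(u)}$ trivially gives positivity somewhere at every earlier slice, so $\bar t(u) \in S^*$ and $\bar t(u) \leq t^*(u)$. I would also record that the Weak Maximum Principle holds here (Theorem \ref{WCP2}\ref{WCP2:i} for $\lambda \leq 0$, and \cite[Theorem 2.4]{BobkovTakac} for $\lambda \in (0,\lambda_1]$), so $u \geq 0$ throughout $\Omega_T$ and ``positive nowhere at a slice'' means ``identically zero at that slice.''

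For the reverse inequality I would argue by contradiction, assuming $\bar t(u) < t^*(u)$, so in particular $\bar t(u) < T$. Since $u \in C^{1,0}(\Omega_T)$ is continuous and $u > 0$ on $\Omega_{\bar t(u)}$, continuity yields $u(\cdot, \bar t(u)) \geq 0$; and since $\bar t(u) \in (0, t^*(u))$ with $t^*(u) \in S^*$, there is a point $x_0$ with $u(x_0, \bar t(u)) > 0$, so $u(\cdot, \bar t(u)) \not\equiv 0$. The time regularity $u \in C([0,T] \to L^2(\Omega)) \cap L^p((0,T) \to W^{1,p}(\Omega))$ together with the vanishing lateral trace places $u(\cdot, \bar t(u)) \in W_0^{1,p}(\Omega) \cap L^2(\Omega)$, so Theorem \ref{SMP}\ref{SMP:i} applies to $u$ regarded as a weak solution on $\Omega \times (\bar t(u), T)$ with initial datum $u(\cdot, \bar t(u))$. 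This produces $\delta > 0$ with $u > 0$ in $\Omega \times (\bar t(u), \bar t(u)+\delta)$. Combined with $u > 0$ in $\Omega \times (0, \bar t(u))$ and $u \geq 0$, every interior zero of $u$ in $\Omega \times (0, \bar t(u)+\delta)$ must lie on the single slice $\{t = \bar t(u)\}$.

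The crux, and the step I expect to be the main obstacle, is to exclude an interior zero on that slice: if $u(\cdot, \bar t(u)) > 0$ everywhere, then $u > 0$ in $\Omega_{\bar t(u)+\delta}$, contradicting the maximality of $\bar t(u)$ and forcing $\bar t(u) = t^*(u)$. What must be ruled out is that $u(\cdot, \bar t(u))$ vanishes at some interior $x_1$ while $u(x_1, \cdot) > 0$ on both temporal sides of $\bar t(u)$; note that such a zero is consistent with continuity and with the forward-looking restarted principle, so it cannot be removed by soft arguments alone. I would resolve it exactly by the mechanism behind Theorem \ref{SMP}\ref{SMP:i}: the point $x_1$ is an interior space--time minimum with $u(x_1,\bar t(u)) = 0$ and $\nabla_x u(x_1,\bar t(u)) = 0$, and the quantitative intrinsic Harnack inequality for the fast-diffusion $p$-Laplacian with $p < 2$ (cf.\ \cite{dibenedetto1993,dibenedetto2011}), propagating forward the strictly positive values on $\Omega \times (0,\bar t(u))$, yields $u(x_1,\bar t(u)) > 0$, the desired contradiction. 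This strong-positivity step at the critical slice is the heart of the matter; the remainder is bookkeeping about the two maxima.
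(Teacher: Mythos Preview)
Your treatment of well-definedness and of the inequality $\bar t(u)\le t^*(u)$ is fine, but the reverse direction is overworked and the decisive step rests on the wrong tool.

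The paper's route is immediate once one isolates the main byproduct of the proof of Theorem~\ref{SMP}\ref{SMP:i} (recorded separately as Theorem~\ref{SMP2}\ref{SMP2:i}): if $u(x_0,t_0)=0$ at any interior point, then $u(\cdot,t_0)\equiv 0$ on the whole spatial slice. Taking the contrapositive, for every $s\in(0,t^*(u))$ the existence of a single point $x_s$ with $u(x_s,s)>0$ forces $u(\cdot,s)>0$ throughout $\Omega$. Hence $u>0$ in $\Omega_{t^*(u)}$, so $t^*(u)\le \bar t(u)$ and the two quantities coincide. No restart of the SMP, no contradiction argument, and no separate analysis at a ``critical slice'' is needed; this is why the paper simply says the corollary follows directly from that proof.

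Your detour --- restarting Theorem~\ref{SMP}\ref{SMP:i} at time $\bar t(u)$ and then invoking an intrinsic Harnack inequality to exclude a zero on the slice $\{t=\bar t(u)\}$ --- is both unnecessary and shaky. First, the restart has a technical gap: membership $u\in L^p((0,T)\to W^{1,p}(\Omega))$ gives $u(\cdot,t)\in W_0^{1,p}(\Omega)$ only for a.e.\ $t$, not for the specific value $t=\bar t(u)$, so the initial-datum hypothesis of Theorem~\ref{SMP}\ref{SMP:i} on the shifted problem is not verified as written. Second, and more to the point, the ``mechanism behind Theorem~\ref{SMP}\ref{SMP:i}'' is not a Harnack inequality; it is a barrier comparison with the explicit subsolution \eqref{eq:v}, followed by the Protter--Weinberger/Smoller slice argument. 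The intrinsic Harnack inequalities for the singular range $p<2$ in \cite{dibenedetto1993,dibenedetto2011} come with an intrinsic waiting time and typically require $p>\frac{2N}{N+1}$, so they do not directly deliver pointwise positivity at the exact time $\bar t(u)$ for all $1<p<2$. What actually rules out a zero at $(x_1,\bar t(u))$ is precisely the slice-vanishing property above --- and once you have that property in hand, applying it at \emph{every} $s<t^*(u)$ collapses your entire scaffolding to the one-line argument in the previous paragraph.
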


Note that counterexamples \eqref{bar}, \eqref{finext}, and \cite[pp.\ 226-227]{BobkovTakac} show that the restriction of $\Omega_T$ to $\Omega_{\bar{t}(u)}$ in assertion \ref{SMP:i} and additional assumption $u_0 > 0$ in assertion \ref{SMP:iii} are essential, and, in general, cannot be removed. 

\begin{remark}\label{rem:reg}
	Assume that $\partial \Omega \in C^{1+\alpha}$, $\alpha \in (0,1)$, and a weak solution $u$ of \eqref{eq:P} satisfies $u \in L^\infty(\Omega_T)$. Then $u \in C^{1+\beta, (1+\beta)/2}(\overline{\Omega}\times [\tau, T])$ for any $\tau \in (0, T)$, where $\beta \in (0,1)$ is independent of $u$; see \cite[Theorem 0.1]{lieberman1993} (or \cite[Lemma 4.6]{takac2010} for notations used in the present article). Moreover, if the initial data $u_0 \in C^{1+\beta}(\overline{\Omega})$, then $u \in C^{1+\beta, (1+\beta)/2}(\overline{\Omega_T})$.
	Here, $C^{1+\beta, (1+\beta)/2}(\overline{\Omega}\times [\tau, T])$ is the standard parabolic H\"older space, see, e.g., \cite[(0.6), p.\ 552]{lieberman1993}.
\end{remark}

\begin{remark}
	Considering $f \equiv 0$ in $\Omega_{t_0}$ for some $t_0 \in (0, T)$, and $u_0 \equiv 0$ in $\Omega$, we see that the strict positivity of $u$ in $\Omega_T$ can be violated by taking $u \equiv 0$ in $\Omega_{t_0}$.
	For the existence of a local in time nontrivial solution of \eqref{eq:P} in $\Omega \times [t_0, t_0 + \varepsilon)$ we refer the reader to  \cite[Appendix A]{takac2010} and references therein.
\end{remark}

Let us give another version of the SMP which does not depend on the assumption $u_0 \not\equiv 0$.
\begin{theorem}\label{SMP2}
	Assume that $f \in L^\infty(\Omega_T)$, $f \geq 0$ a.e.\ in $\Omega_T$ and $u_0 \in W_0^{1,p}(\Omega) \cap L^2(\Omega)$, $u_0 \geq 0$ a.e.\ in $\Omega$. 
	Let $u \in C^{1,0}(\Omega_T)$ be a weak solution of \eqref{eq:P} and there exists $(x_0, t_0) \in \Omega_T$ such that $u(x_0, t_0) = 0$. 
	Then the following assertions are valid:
	\begin{enumerate}[label={\rm(\roman*)}]
		\item\label{SMP2:i} If $p < 2$ and $\lambda \leq \lambda_1$, then $u(x, t_0) = 0$ for all $x \in \Omega$.
		\item\label{SMP2:ii} If $p = 2$ and $\lambda \in \mathbb{R}$, then $u(x, t) = 0$ for all $(x, t) \in \Omega\times(0, t_0]$.
		\item\label{SMP2:iii} If $p>2$ and $\lambda \leq 0$, then $u(x_0, t) = 0$ for all $t \in (0, t_0]$.
		\item\label{SMP2:iv} If $p>2$ and $\lambda > 0$, assume also $u \in L^\infty(\Omega_T)$. Then $u(x_0, t) = 0$ for all $t \in (0, t_0]$.
	\end{enumerate}
\end{theorem}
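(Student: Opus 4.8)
The plan is to deduce Theorem~\ref{SMP2} from the already available Theorem~\ref{SMP} (and, in the singular case, Corollary~\ref{SMPcor}) by a time-translation argument, supplemented in the degenerate case by the Weak Comparison Principle of Theorem~\ref{WCP2}. First I would record that under $u_0\geq 0$ and $f\geq 0$ the Weak Maximum Principle holds in each regime --- by Theorem~\ref{WCP2} together with Remark~\ref{rem:wcp} (which also records the case $p<2$, $\lambda\in(0,\lambda_1]$), and classically for $p=2$ --- so that $u\geq 0$ in $\Omega_T$ and $(x_0,t_0)$ is a zero minimum of $u$. I would also note the routine fact that, for a.e.\ $t_1\in(0,t_0)$, the time-shifted function $w(x,s):=u(x,t_1+s)$ is again a weak solution of a problem of type \eqref{eq:P} on $\Omega\times(0,T-t_1)$, with the same $\lambda$ and $p$, nonnegative bounded source $f(\cdot,t_1+\cdot)$ and nonnegative initial datum $w_0=u(\cdot,t_1)\in W_0^{1,p}(\Omega)\cap L^2(\Omega)$; these shifted data are admissible, $w\in C^{1,0}$, and $w\in L^\infty$ whenever $u$ is.

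Case $p=2$ (assertion \ref{SMP2:ii}) is the model case: for every $t_1\in(0,t_0)$ with $u(\cdot,t_1)\not\equiv 0$, Theorem~\ref{SMP}~\ref{SMP:ii} applied to $w$ gives $w>0$ on $\Omega\times(0,T-t_1)$, hence $u>0$ on $\Omega\times(t_1,T)\ni(x_0,t_0)$, contradicting $u(x_0,t_0)=0$; thus $u(\cdot,t_1)\equiv 0$ for a.e.\ $t_1\in(0,t_0)$, and continuity of $t\mapsto u(\cdot,t)$ propagates this to all of $\Omega\times(0,t_0]$. The singular case $p<2$ (assertion \ref{SMP2:i}) is handled in the same spirit but is the genuinely delicate one, since finite-time extinction forces the conclusion down to the single slice $\{t=t_0\}$. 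Arguing by contradiction, assume $u(x_1,t_0)>0$ for some $x_1\in\Omega$; by joint continuity $u(x_1,\cdot)>0$ on an \emph{open} interval $(t_0-\delta,t_0+\delta)\subset(0,T)$. Fixing $t_1\in(t_0-\delta,t_0)$ with $u(\cdot,t_1)\not\equiv 0$ and setting $\sigma:=t_0-t_1$, the witness $x_1$ gives $w(x_1,s)>0$ for all $s\in(0,\sigma+\delta)$, so in the notation of Corollary~\ref{SMPcor} one has $t^*(w)\geq\sigma+\delta>\sigma$. The crucial input is then the equality $\bar t(w)=t^*(w)$ of Corollary~\ref{SMPcor}, which upgrades this slice-wise positivity to $w>0$ throughout $\Omega\times(0,\bar t(w))$ with $\bar t(w)>\sigma$; in particular $u(x_0,t_0)=w(x_0,\sigma)>0$, a contradiction, whence $u(\cdot,t_0)\equiv 0$.

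For the degenerate regimes $p>2$ (assertions \ref{SMP2:iii} and \ref{SMP2:iv}) the conclusion is vanishing along the whole segment $\{x_0\}\times(0,t_0]$, and finite speed of propagation precludes a slice-wise statement; here I would propagate positivity \emph{forward} by a local comparison. Suppose $u(x_0,t_1)>0$ for some $t_1\in(0,t_0)$. By continuity there are $\rho>0$ with $\overline{B_\rho(x_0)}\subset\Omega$ and $\delta>0$ with $u(\cdot,t_1)\geq\delta$ on $\overline{B_\rho(x_0)}$. Let $\psi$ solve the reaction problem on $Q:=B_\rho(x_0)\times(t_1,T)$ with zero lateral data and initial datum $\delta\,e_\rho$, where $e_\rho$ is the positive first eigenfunction of $-\Delta_p$ on $B_\rho(x_0)$ normalized by $\max e_\rho=1$. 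In the absorptive case $\lambda\leq 0$ (assertion \ref{SMP2:iii}), $\psi$ and $u$ are ordered on the parabolic boundary of $Q$ (lateral $\psi=0\leq u$; initial $\delta e_\rho\leq\delta\leq u(\cdot,t_1)$) and $0\leq f$, so Theorem~\ref{WCP2}~\ref{WCP2:i} yields $\psi\leq u$ on $Q$. In the case $\lambda>0$ (assertion \ref{SMP2:iv}) I would instead \emph{absorb} the reaction into the source: since $u\geq 0$ and $u\in L^\infty$, $u$ is a weak solution on $Q$ of the problem with $\lambda$ replaced by $0$ and nonnegative bounded source $\tilde f:=\lambda|u|^{p-2}u+f$, while $\psi$ (taken likewise with $\lambda$ replaced by $0$) has source $0\leq\tilde f$; Theorem~\ref{WCP2}~\ref{WCP2:i} again gives $\psi\leq u$ on $Q$, sidestepping the boundedness difficulties a genuine $\lambda>0$ comparison solution would pose. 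In either case $\psi$ has initial datum strictly positive on compact subsets of $B_\rho(x_0)$ and the comparison parameter is $\leq 0$, so Theorem~\ref{SMP}~\ref{SMP:iii} applies to $\psi$ and gives $\psi>0$ on $Q$, whence $0=u(x_0,t_0)\geq\psi(x_0,t_0)>0$, a contradiction. Therefore $u(x_0,t)=0$ for every $t\in(0,t_0)$, and the given value $u(x_0,t_0)=0$ completes the segment.

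I expect the main obstacle to lie in assertion \ref{SMP2:i}: because fast diffusion admits finite-time extinction and a nonnegative source may in principle revive the solution, one cannot argue by forward positivity alone, and the whole force of the proof rests on the equality $\bar t(w)=t^*(w)$ of Corollary~\ref{SMPcor}, i.e.\ on the \emph{simultaneity} of vanishing, together with the careful use of the open-in-time positivity of $u(x_1,\cdot)$ to make $t^*(w)>\sigma$ strict. A secondary point, in \ref{SMP2:iv}, is the legitimacy of the $\lambda\to 0$ reduction, which is exactly where the hypothesis $u\in L^\infty(\Omega_T)$ is consumed. The remaining verifications --- that time translations preserve weak solutions, and that restriction to $Q$ keeps $u$ admissible with continuous data while $\psi$ is a bounded, hence $C^{1,0}$, solution --- are routine in view of the regularity recorded in Remark~\ref{rem:reg}.
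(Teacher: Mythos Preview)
Your argument is correct in all four cases, but for the degenerate assertions \ref{SMP2:iii} and \ref{SMP2:iv} it takes a noticeably more indirect route than the paper. The paper does not introduce an auxiliary parabolic solution $\psi$ at all: it simply reuses the \emph{explicit} polynomial barrier
\[
w(x,t)=C\,(R^{2}-|x|^{2})^{m}\,(T-t),\qquad m\ge \tfrac{p}{p-2},
\]
already constructed in the proof of Theorem~\ref{SMP}~\ref{SMP:iii}/\ref{SMP:iv}. After finding $t_{1}\in(0,t_{0})$ with $u(x_{0},t_{1})>0$ and a ball $K_{R}\ni x_{0}$ on which $u(\cdot,t_{1})$ is bounded below, the same computation as before shows $\partial_{t}w-\Delta_{p}w-\lambda|w|^{p-2}w\le 0$ on $K_{R}\times[t_{1},T]$ for $C$ small, and Theorem~\ref{WCP2} (case \ref{WCP2:i} if $\lambda\le 0$, case \ref{WCP2:ii} if $\lambda>0$, noting $w\in L^{\infty}$ trivially) gives $w\le u$, whence $u(x_{0},t_{0})\ge w(x_{0},t_{0})>0$. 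This is shorter because it bypasses the existence, boundedness, and $C^{1,0}$-regularity of your $\psi$, and it handles $\lambda>0$ without your ``absorb the reaction into the source'' detour. Your approach, on the other hand, is more modular: it reduces everything to Theorems~\ref{WCP2} and~\ref{SMP} as black boxes, which would pay off if one wanted to replace the right-hand side $\lambda|u|^{p-2}u$ by a more general nonlinearity for which an explicit barrier is harder to write down.

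For assertion~\ref{SMP2:i} you are essentially doing the same thing as the paper, just packaged differently. The paper says the claim ``follows again from the proof of assertion~\ref{SMP:i} of Theorem~\ref{SMP}'': that proof (the exponential barrier plus the Protter--Weinberger/Smoller slice lemma) already establishes that any interior zero of $u$ forces $u(\cdot,t)\equiv 0$ on that time slice. Your time-shift together with the identity $\bar t(w)=t^{*}(w)$ of Corollary~\ref{SMPcor} is a correct and clean way to invoke exactly this content without reopening the barrier construction; the care you take with the open interval $(t_{0}-\delta,t_{0}+\delta)$ to make $t^{*}(w)>\sigma$ strict is the right point to stress.
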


\begin{remark}
	The conclusions of assertions (i) of Theorems \ref{SMP} and \ref{SMP2} remain valid for the case $p<2$ and $\lambda > \lambda_1$ if we know \textit{a priori} that a considered solution $u$ of \eqref{eq:P} is nonnegative in $\Omega_T$, i.e., it satisfies the WMP. In this case we can apply the SMP to \eqref{eq:P} with the source function $\tilde{f} = \lambda |u|^{p-2} u + f$, $\tilde{f} \geq 0$ in $\Omega_T$.	
	However, the counterexample in Section \ref{sec:nonunieq} indicates that the WMP for $p<2$ and $\lambda > \lambda_1$ may be violated, in general.
\end{remark}

\begin{remark}
	In Theorems \ref{SMP} and \ref{SMP2} the zero boundary condition $u = 0$ on $\partial\Omega_T$ is used, in fact, to treat the case $p<2$ and $\lambda \in (0, \lambda_1]$, only. 
	(In this case we can guarantee that $u \geq 0$ in $\Omega_T$, see \cite[Theorem 2.4]{BobkovTakac}.)
	In all other considered cases (i.e., $p < 2$, $\lambda \leq 0$, and $p > 2$, $\lambda \in \mathbb{R}$), the assertions  of Theorems \ref{SMP} and \ref{SMP2} hold for corresponding solutions of \eqref{eq:P1} with $h \geq 0$ on $\partial \Omega_T$, since the WCP given by Theorem \ref{WCP2} implies the WMP.
	In particular, Theorem \ref{SMP} \ref{SMP:iii} implies that weak $C^{1,0}$-solutions of \eqref{eq:P1} with $p>2$, $\lambda \in \mathbb{R}$, and $h \geq 0$ cannot extinct in a finite time.
\end{remark}

A further important development of \textit{maximum principle} properties of solutions to problem \eqref{eq:P} can be given by the Hopf Maximum Principle (HMP).

\begin{theorem}[HMP]\label{HMP}
	Assume that $f \in L^\infty(\Omega_T)$, $f \geq 0$ a.e.\ in $\Omega_T$ and $u_0 \in W_0^{1,p}(\Omega) \cap L^2(\Omega)$, $u_0 \geq 0$ a.e.\ in $\Omega$. Assume also that $\partial \Omega$ satisfies the interior sphere condition at a point $(x_1, t_1) \in \partial \Omega_T$. Let 
	$u \in C^{1,0}(\Omega_T \cup \{(x_1, t_1)\})$ be a weak solution of \eqref{eq:P}. If $u(\cdot,t_1) > 0$ in $\Omega$ and either 
	\begin{enumerate}[label={\rm(\roman*)}]
		\item\label{HMP:i} $p < 2$ and $\lambda \leq \lambda_1$, or
		\item\label{HMP:ii} $p = 2$ and $\lambda \in \mathbb{R}$, 
	\end{enumerate}
	then the outer normal derivative of $u$ at $(x_1, t_1)$ is strictly negative, i.e.,
	$$
	\frac{\partial u (x_1,t_1)}{\partial \nu} < 0,
	$$
	where $\nu$ is the outer unit normal to $\partial \Omega_T$ at $(x_1,t_1)$.
\end{theorem}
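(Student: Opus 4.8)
The plan is to use the classical barrier (comparison-function) method. The assertion for $p=2$ is standard, so I concentrate on case \ref{HMP:i}, $p<2$ and $\lambda\le\lambda_1$. Since $t_1\in(0,T)$, the outer normal $\nu$ to $\partial\Omega_T$ at $(x_1,t_1)$ is purely spatial, $\nu=(\nu_x,0)$, so it suffices to bound the spatial normal derivative. I first record two preliminary facts. Because $p<2$ and $\lambda\le\lambda_1$, the WMP holds by \cite[Theorem 2.4]{BobkovTakac}, hence $u\ge0$ throughout $\Omega_T$; in particular $u(x_1,t_1)=0$ since $u=0$ on $\partial\Omega_T$. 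Using the interior sphere condition I fix a ball $B_R(y)\subset\Omega$ with $x_1\in\partial B_R(y)\cap\partial\Omega$, so that $\nu_x=(x_1-y)/R$. By hypothesis $u(\cdot,t_1)>0$ on the compact sphere $\{|x-y|=\rho\}$ for a fixed $\rho\in(0,R)$, and continuity of $u$ up to $t_1$ yields $\tau_0>0$ and $m>0$ with $u\ge m$ on $\{|x-y|=\rho\}\times[t_1-\tau_0,t_1]$.

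The second step is the barrier. On the spatial annulus $D=\{\rho<|x-y|<R\}$ and the cylinder $Q=D\times(t_1-\tau,t_1]$ I take
\[
w(x,t)=\epsilon\bigl(e^{-\alpha|x-y|^2}-e^{-\alpha R^2}\bigr)-\delta\,(t_1-t),
\]
with $\alpha$ large and $\epsilon,\delta,\tau$ small to be fixed. A direct computation, using that the $p$-Laplacian is $(p-1)$-homogeneous so that $\Delta_p(\epsilon\,\cdot)=\epsilon^{p-1}\Delta_p(\cdot)$, shows that for $\alpha$ large the quantity $\Delta_p\bigl(e^{-\alpha|x-y|^2}\bigr)$ is bounded below by a positive constant $c_0$ on $\overline D$, whence $\partial_t w-\Delta_p w\le\delta-\epsilon^{p-1}c_0$. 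I then check that $w\le u$ on the parabolic boundary $\partial_pQ$: on $\{|x-y|=R\}$ one has $w\le0\le u$; on $\{|x-y|=\rho\}$, choosing $\epsilon$ small gives $w\le m\le u$; and on the bottom $\{t=t_1-\tau\}$, choosing $\delta\tau$ larger than $\max_{\overline D}\epsilon(e^{-\alpha|x-y|^2}-e^{-\alpha R^2})$ forces $w\le0\le u$ there too. The key point is that the two requirements $\delta\le\epsilon^{p-1}c_0$ (subsolution) and $\delta\tau\ge\epsilon\,(e^{-\alpha\rho^2}-e^{-\alpha R^2})$ (bottom ordering) are simultaneously solvable for small $\epsilon$ precisely because $2-p>0$; this is where $p<2$ enters.

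The third step is the comparison, split according to the sign of $\lambda$. If $\lambda\le0$, then $w$ is a subsolution of \eqref{eq:P} on $Q$ (the reaction term is absorbed by enlarging $\alpha$, as $|w|^{p-2}w$ is bounded on $Q$), and Theorem \ref{WCP2} \ref{WCP2:i}, applied locally on $Q$ to the subsolution $w$ and the solution $u$, gives $w\le u$ in $Q$. If $0<\lambda\le\lambda_1$, I use the reduction already exploited in the remarks following Theorem \ref{SMP2}: since $u\ge0$, the function $u$ solves $\partial_tu-\Delta_pu=\tilde f$ with $\tilde f:=\lambda u^{p-1}+f\ge0$, i.e.\ the problem with $\lambda=0$ and a nonnegative source; as $w$ satisfies $\partial_t w-\Delta_p w\le0\le\tilde f$, Theorem \ref{WCP2} \ref{WCP2:i} with $\lambda=0$ again yields $w\le u$ in $Q$. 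In both subcases $z:=u-w\ge0$ on $\overline D$ at time $t_1$ and $z(x_1,t_1)=0$, so the spatial outer normal derivative of $z(\cdot,t_1)$ at the boundary minimum $x_1$ is nonpositive, giving
\[
\frac{\partial u(x_1,t_1)}{\partial\nu}\le\frac{\partial w(x_1,t_1)}{\partial\nu}=-2\alpha R\,\epsilon\,e^{-\alpha R^2}<0,
\]
which is the assertion; here I use that $u\in C^{1,0}$ up to $(x_1,t_1)$, so the derivative exists.

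The main obstacle is the comparison step for $0<\lambda\le\lambda_1$, since Theorem \ref{WCP2} does not cover $p<2$ with $\lambda>0$, and indeed the global WCP in that range is open (Remark \ref{rem:wcp}). The device that unlocks it is that we only need a one-sided, \emph{local} comparison against an explicit subsolution: absorbing the reaction into a nonnegative source reduces the matter to the monotone case $\lambda=0$, for which Theorem \ref{WCP2} \ref{WCP2:i} applies. This is legitimate precisely because the WMP, valid here thanks to $\lambda\le\lambda_1$, guarantees $u\ge0$ and hence $\tilde f\ge0$. A secondary technical point is that Theorem \ref{WCP2} is stated for solutions, whereas the version needed compares a subsolution with a solution on the subcylinder $Q$; this follows verbatim from the monotonicity argument underlying Theorem \ref{WCP2}.
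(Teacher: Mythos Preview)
Your argument is correct, and the device you single out---absorbing $\lambda u^{p-1}$ into a nonnegative source $\tilde f$ so that the comparison reduces to the monotone case $\lambda=0$---is exactly the mechanism the paper uses for $0<\lambda\le\lambda_1$ (it does the same in the proof of Theorem~\ref{SMP}~\ref{SMP:i}). The compatibility analysis you give, showing that the constraints $\delta\le\epsilon^{p-1}c_0$ and $\delta\tau\ge\epsilon(e^{-\alpha\rho^2}-e^{-\alpha R^2})$ are jointly solvable precisely because $2-p>0$, is the right explanation of why the barrier works for $p<2$ and not for $p>2$.

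The route, however, differs from the paper's. The paper does not set up a separate cylindrical barrier; instead it simply reuses the space--time barrier
\[
v(x,t)=\varepsilon\bigl(e^{-\alpha d(x,t)^2}-e^{-\alpha R^2}\bigr),\qquad d(x,t)=\sqrt{|x-x_0|^2+|t-t_0|},
\]
already constructed in the proof of the SMP (Theorem~\ref{SMP}~\ref{SMP:i}), placing the space--time ball $B_R(x_0,t_0)$ inside $\{u>0\}$ so that it touches $\partial\Omega_T$ at $(x_1,t_1)$ with $t_0=t_1$; the whole subsolution computation---including the $\lambda$-term---is inherited verbatim from that proof. Your construction instead works on a product domain $Q=D\times(t_1-\tau,t_1]$ with $D$ a spatial annulus and a barrier that is the classical elliptic Hopf exponential plus a linear time shift $-\delta(t_1-t)$. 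What the paper's choice buys is economy: no new computation is needed, and the $\lambda$-term is handled simultaneously with the diffusion term by a single choice of large $\alpha$. What your choice buys is transparency: the geometry is the familiar annular Hopf setup, and the role of $p<2$ becomes a clean algebraic constraint on the three free parameters $(\epsilon,\delta,\tau)$. One small point worth tightening in your write-up: when you say the reaction term for $\lambda\le0$ ``is absorbed by enlarging~$\alpha$,'' the mechanism is that on $\{w>0\}$ one has $|\lambda|w^{p-1}\le|\lambda|\epsilon^{p-1}e^{-\alpha(p-1)r^2}$, while $\epsilon^{p-1}\Delta_p\phi(r)$ carries an extra polynomial factor in $\alpha$, so their ratio is $O(\alpha^{-p})$; this is what lets you replace $c_0$ by, say, $c_0/2$ after absorbing the zero-order term.
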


By counterexample \eqref{bar} or \cite[p.\ 229]{BobkovTakac} we know that the HMP is violated for  $p > 2$ and $\lambda \in \mathbb{R}$.

\medskip
Finally we discuss the Strong Comparison Principle. 

\noindent
\textbf{Hypothesis.} 
We assume that $\Omega$ is of class $C^{1+\alpha}$ for some $\alpha \in (0,1)$ and satisfies the interior sphere condition. 

Consider the following initial-boundary value problems of the type \eqref{eq:P}:
\begin{align}
\label{l=01}
\partial_t u - \Delta_p u = \lambda|u|^{p-2} u + f \mbox{ in } \Omega_T,~~ 
&u(x, 0) = u_0  \mbox{ in } \Omega,~~
u = 0 \mbox{ on } \partial \Omega_T,
\\[0.4em]
\label{l=02}
\partial_t v - \Delta_p v = \lambda|v|^{p-2} v + g \mbox{ in } \Omega_T,~~ 
&v(x, 0) = v_0  \,\mbox{ in } \Omega,~~
v = 0 \mbox{ on } \partial \Omega_T.
\end{align}
Here $0 \leq f \leq g$ a.e.\ in $\Omega_T$ and $0 \leq u_0 \leq v_0$ a.e.\ in $\Omega$.
Let $u$ and $v$ be bounded weak solutions of \eqref{l=01} and \eqref{l=02}, respectively.
From Remark \ref{rem:reg} we know that $u, v \in C^{1+\beta, (1+\beta)/2}(\overline{\Omega}\times [\tau, T])$ for any $\tau \in (0, T)$. Assume also that $\bar{t}(v)$, defined by \eqref{eq:max} for the solution $v$, is strictly positive (it can be achieved, e.g., by taking $v_0 \not\equiv 0$ in $\Omega$, see Corollary \ref{SMPcor}), that is,  
$v > 0$ in $\Omega_{\bar{t}(v)}$.

\begin{theorem}[SCP]
	\label{SCP}
	Let $p<2$ and $\lambda = 0$.
	If there exists $\tau > 0$ such that $u < v$ in $\Omega_\tau$, then
	\begin{equation}
	\label{eq:con1}
	0 \leq u < v 
	\text{ in } \Omega_{\bar{t}(v)} 
	~\text{ and }~
	\frac{\partial v}{\partial \nu} < \frac{\partial u}{\partial \nu} \leq 0 
	\text{ on }
	\partial \Omega_{\bar{t}(v)}.
	\end{equation}
\end{theorem}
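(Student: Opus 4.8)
The plan is to reduce the comparison to a linear parabolic problem for the difference $w = v - u$ and then to apply, on the positivity cylinder $\Omega_{\bar{t}(v)}$ of $v$, the classical strong maximum principle and Hopf boundary lemma for linear equations. To begin with, the hypotheses $\lambda = 0 \le 0$ and $f \le g$ a.e.\ in $\Omega_T$ let us invoke Theorem~\ref{WCP2}\ref{WCP2:i} (with $E = \Omega_T$) to obtain $u \le v$ in $\Omega_T$; comparing $u$ with the trivial solution gives $u \ge 0$, so $0 \le u \le v$ and $w \ge 0$. On the set $\overline{\Omega} \times [\tau, T]$, where by Remark~\ref{rem:reg} both $u$ and $v$ are of class $C^{1+\beta,(1+\beta)/2}$, the elementary identity, with $\xi_s = (1-s)\nabla u + s\nabla v$,
\[
|\nabla v|^{p-2}\nabla v - |\nabla u|^{p-2}\nabla u = A(x,t)\,\nabla w, \qquad A(x,t) = \int_0^1 |\xi_s|^{p-2}\Bigl(I + (p-2)\,\frac{\xi_s \otimes \xi_s}{|\xi_s|^2}\Bigr)ds,
\]
shows, after subtracting the weak formulations of \eqref{l=01} and \eqref{l=02}, that $w$ is a weak solution of the linear equation $\partial_t w - \mathrm{div}(A(x,t)\nabla w) = g - f \ge 0$. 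The matrix $A$ is symmetric, with the eigenvalues of its integrand equal to $(p-1)|\xi_s|^{p-2}$ (along $\xi_s$) and $|\xi_s|^{p-2}$ (orthogonally), hence positive definite whenever $\xi_s$ does not vanish identically in $s$.

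The next step is to establish uniform parabolicity of this operator on compact subsets. On any $Q \Subset \Omega_{\bar{t}(v)}$ the gradients are bounded, $|\nabla u|, |\nabla v| \le M_Q$, so using $p < 2$ and Weyl's inequality the smallest eigenvalue of $A$ satisfies $\lambda_{\min}(A) \ge (p-1)\int_0^1 |\xi_s|^{p-2}ds \ge (p-1)M_Q^{p-2} > 0$, which furnishes the ellipticity constant. The subtle part is the upper bound: the largest eigenvalue of $A$ is controlled by $\int_0^1 |\xi_s|^{p-2}ds$, which is finite whenever $(\nabla u, \nabla v) \ne (0,0)$ — for $p > 1$ even a transversal crossing $\xi_{s_0} = 0$ is integrable — yet blows up exactly at the interior points where $\nabla u$ and $\nabla v$ vanish simultaneously. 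I expect this behaviour of $A$ at the common critical points lying on the contact set $\{w = 0\}$ to be the main obstacle, to be overcome by a localization around contact points (distinguishing whether $\nabla v$ vanishes there) combined with the integrability $\int_0^1 |\xi_s|^{p-2}ds < \infty$, in the spirit of the elliptic strong comparison principle.

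Once uniform parabolicity is in hand, the interior strict inequality follows from the strong maximum principle for the linear equation. Indeed, $w \ge 0$ is a supersolution that is strictly positive on the initial slab $\Omega_\tau$ by the hypothesis $u < v$ there, and propagating this positivity forward in time across $\Omega_{\bar{t}(v)}$ — e.g.\ via the weak Harnack inequality for nonnegative supersolutions applied along chains of subcylinders, or via the pointwise contact-point argument — yields $w > 0$, that is $0 \le u < v$ in $\Omega_{\bar{t}(v)}$.

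It remains to prove the boundary gradient estimate, and here the situation near the lateral boundary is more favourable than in the interior. Since $v(\cdot,t_1) > 0$ in $\Omega$ for $0 < t_1 < \bar{t}(v)$ and $p < 2$, $\lambda = 0 \le \lambda_1$, Theorem~\ref{HMP}\ref{HMP:i} gives $\partial v/\partial\nu < 0$, so $\nabla v \ne 0$ in a boundary neighbourhood; consequently $A$ is there both uniformly elliptic and bounded. Applying the classical Hopf boundary point lemma to $w$, which is nonnegative, vanishes on $\partial\Omega_T$, and is positive inside, we obtain $\partial w/\partial\nu < 0$ on $\partial\Omega_{\bar{t}(v)}$, i.e.\ $\partial v/\partial\nu < \partial u/\partial\nu$. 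Finally $\partial u/\partial\nu \le 0$ because $u \ge 0$ in $\Omega$ attains its boundary value $0$ as a minimum along $\partial\Omega$. Combining these inequalities yields \eqref{eq:con1}.
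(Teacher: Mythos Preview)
Your boundary argument is essentially the paper's Lemma~\ref{scpboundary}: near $\partial\Omega$ the Hopf principle for $v$ forces $|\nabla v|\ge\eta_1>0$, so the linearized matrix $A(x,t)$ is uniformly elliptic and bounded there, and the classical SMP/Hopf lemma apply to $w=v-u$. That part is fine.

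The gap is in the interior. You correctly identify that $\int_0^1|\xi_s|^{p-2}\,ds$ blows up at common critical points of $u$ and $v$, and then write ``I expect this behaviour \dots\ to be overcome by a localization \dots\ in the spirit of the elliptic strong comparison principle.'' That is not a proof; it is a hope. The Moser--Harnack machinery you want to invoke requires an upper bound on the coefficients, and you have no mechanism to supply one on a compact $Q\Subset\Omega_{\bar t(v)}$ containing such critical points. Distinguishing whether $\nabla v$ vanishes at a contact point does not help either, because the blow-up of $A$ can occur at points where $u<v$ strictly, well away from the contact set.

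The paper sidesteps this difficulty completely by \emph{not} linearizing in the interior. Once $u<v$ is known in the boundary collar $\mathcal{O}_\delta^\eta$ (via the linear argument you reproduced), one picks $\delta_1\in(0,\delta)$ and $\alpha>0$ with $u+\alpha\le v$ on $\partial(\Omega\setminus\mathcal{O}_{\delta_1})\times[\eta,\bar t(v)-\eta]$. Since adding a constant leaves the $p$-Laplacian unchanged,
\[
\partial_t(u+\alpha)-\Delta_p(u+\alpha)=\partial_t u-\Delta_p u=f\le g=\partial_t v-\Delta_p v,
\]
and the \emph{nonlinear} WCP (Theorem~\ref{WCP2}) gives $u+\alpha\le v$ throughout the interior cylinder. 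This constant-shift trick is the missing idea: it converts the boundary-collar strict inequality into an interior strict inequality without ever needing $A$ to be bounded.
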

In words, Theorem \ref{SCP} states that the local in time strict inequality $u < v$ extends until the maximal time of applicability of the SMP for $v$.
Let us state also the SCP under different conditions.
\begin{theorem}
	\label{SCP2}
	Let $p<2$ and $\lambda = 0$.
	Assume that $u_0, v_0 \in C^{1+\beta}(\overline{\Omega})$. If 
	\begin{equation}
	\label{eq:as1}
	0 \leq u_0 < v_0 
	\text{ in } \Omega
	~\text{ and either }~
	\frac{\partial v_0}{\partial \nu} \leq \frac{\partial u_0}{\partial \nu} < 0 
	\text{ or } 
	\frac{\partial v_0}{\partial \nu} < \frac{\partial u_0}{\partial \nu} \leq 0 
	\text{ on }
	\partial \Omega,
	\end{equation}
	then \eqref{eq:con1} holds.
\end{theorem}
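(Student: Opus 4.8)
The plan is to deduce Theorem \ref{SCP2} directly from Theorem \ref{SCP}: it suffices to produce some $\tau > 0$ for which $u < v$ throughout $\Omega_\tau = \Omega \times (0,\tau)$, after which the conclusion \eqref{eq:con1} is handed to us verbatim by Theorem \ref{SCP}. Since $u_0, v_0 \in C^{1+\beta}(\overline\Omega)$, Remark \ref{rem:reg} upgrades the regularity of the two solutions to $u, v \in C^{1+\beta,(1+\beta)/2}(\overline{\Omega_T})$, so that $u, v, \nabla u, \nabla v$ extend continuously up to $\overline\Omega \times \{0\}$ with limiting values $u_0, v_0, \nabla u_0, \nabla v_0$. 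Set $w = v - u$. By the WCP (Theorem \ref{WCP2}\ref{WCP2:i} with $\lambda = 0$) we have $w \ge 0$ in $\Omega_T$; moreover $w = 0$ on $\partial\Omega_T$ and $w(\cdot,0) = v_0 - u_0 > 0$ in $\Omega$ by \eqref{eq:as1}. I would also note that both alternatives in \eqref{eq:as1} force $\frac{\partial v_0}{\partial\nu} < 0$ on $\partial\Omega$, whence $v_0 \not\equiv 0$ and $\bar t(v) > 0$ by Corollary \ref{SMPcor}, so the standing hypothesis of Theorem \ref{SCP} is indeed met.

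Writing $d(x) = \mathrm{dist}(x,\partial\Omega)$, I would split $\overline\Omega$ into an interior core $K_\delta = \{d \ge \delta\}$ and a boundary strip $S_\delta = \{0 < d < \delta\}$ for a small fixed $\delta > 0$. On the compact core one has $w(\cdot,0) = v_0 - u_0 \ge m_\delta > 0$, and the uniform continuity of $w$ on $\overline{\Omega_T}$ yields some $\tau_1 > 0$ with $w > 0$ on $K_\delta \times [0,\tau_1]$. This half uses no information on the gradients and is the easy part.

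The delicate part is the boundary strip, and here the two alternatives in \eqref{eq:as1} are handled by two different mechanisms — which is, I believe, precisely why the hypothesis is stated as a disjunction. In the case $\frac{\partial v_0}{\partial\nu} < \frac{\partial u_0}{\partial\nu} \le 0$ one has $\frac{\partial w}{\partial\nu}(\cdot,0) = \frac{\partial v_0}{\partial\nu} - \frac{\partial u_0}{\partial\nu} < 0$ on $\partial\Omega$; continuity of $\nabla w$ up to $\partial\Omega \times \{0\}$ and compactness of $\partial\Omega$ then give $\frac{\partial w}{\partial\nu} \le -c < 0$ on a neighborhood $\partial\Omega \times [0,\tau_2]$, and integrating the positive inward derivative from the boundary yields $w(x,t) \ge \tfrac{c}{2}\, d(x) > 0$ for $x \in S_{\delta}$ with $d(x)$ small and $t \in [0,\tau_2]$. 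In the complementary case $\frac{\partial v_0}{\partial\nu} \le \frac{\partial u_0}{\partial\nu} < 0$ the normal derivative of $w$ may vanish, so instead I would linearize: since $\frac{\partial u_0}{\partial\nu}, \frac{\partial v_0}{\partial\nu} < 0$, both $\nabla u$ and $\nabla v$ are nonzero and, being normal-dominated near $\partial\Omega$ (the tangential derivatives of $u_0, v_0$ vanish on $\partial\Omega$), nearly parallel there, so the segment $\nabla u + s(\nabla v - \nabla u)$, $s \in [0,1]$, stays away from the origin on a space-time neighborhood of $\partial\Omega \times \{0\}$. Consequently $\Delta_p v - \Delta_p u = \mathrm{div}(A\nabla w)$ with $A(x,t) = \int_0^1 \big(|\xi|^{p-2} I + (p-2)|\xi|^{p-4}\,\xi\otimes\xi\big)\big|_{\xi = \nabla u + s(\nabla v - \nabla u)}\, ds$ uniformly elliptic and continuous in $S_\delta \times (0,\tau_3)$, and $w$ is there a nonnegative supersolution of the uniformly parabolic equation $\partial_t w - \mathrm{div}(A\nabla w) = g - f \ge 0$. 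The parabolic strong maximum principle then rules out interior zeros of $w$ for $t > 0$ (an interior zero would propagate back to $t = 0$, contradicting $w(\cdot,0) > 0$ in $S_\delta$), giving $w > 0$ in the interior of $S_\delta \times (0,\tau_3)$.

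Combining the core and strip estimates with the appropriate $\tau = \min\{\tau_1,\tau_2\}$ or $\tau = \min\{\tau_1,\tau_3\}$ (according to the active alternative) produces $w > 0$, i.e.\ $u < v$, throughout $\Omega_\tau$, and Theorem \ref{SCP} finishes the argument. I expect the main obstacle to be the second boundary alternative: controlling the $p$-Laplacian linearization through the singular set where $\nabla u$ or $\nabla v$ vanishes. Confining the linearization to the boundary strip, where the Hopf-type condition $\frac{\partial u_0}{\partial\nu} < 0$ keeps both gradients bounded away from $0$ and roughly parallel, is what makes $A$ uniformly elliptic and legitimizes the parabolic strong maximum principle; verifying these uniform bounds and the exclusion of interior zeros down to $t \to 0^+$ is the technical heart of the proof.
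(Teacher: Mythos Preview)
Your proposal is correct and shares the paper's architecture: establish $u<v$ on some short cylinder $\Omega_\tau$ by combining an interior continuity argument with a boundary-strip argument, then invoke Theorem~\ref{SCP}. The paper, however, does not split the boundary analysis according to the two alternatives in \eqref{eq:as1}. Instead it proves a single local SCP (Lemma~\ref{scpboundary2}) in $\mathcal{O}_\delta\times(0,\bar t(v)-\tau)$ under the sole gradient hypothesis $\partial v_0/\partial\nu<0$ on $\partial\Omega$, which both alternatives supply. The linearization there needs only $|\nabla_x v|\ge\eta_1>0$ near the boundary; the possible vanishing of $\nabla_x u$ (allowed in your first alternative) is absorbed by the integral estimate \eqref{inq}, which bounds $\int_0^1|\nabla_x u+s\nabla_x(v-u)|^{p-2}\,ds$ in terms of $\max_s|\cdot|\ge|\nabla_x v|$. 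So the issue you identify as the ``technical heart'' --- keeping the segment $\nabla_x u+s(\nabla_x v-\nabla_x u)$ away from the origin --- is not actually an obstacle in the paper's treatment, and no case distinction is required. On the other hand, your first-alternative argument (integrating the strict Hopf inequality for $w=v-u$ directly) is more elementary than any linearization, and your second-alternative argument, by using $\partial u_0/\partial\nu<0$ to keep both gradients nonzero and aligned, avoids appealing to the technical inequality \eqref{inq}; the case split buys you simpler local estimates at the cost of a less unified proof.
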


Note that the conditions in \eqref{eq:as1} do not directly yield $u < v$ in some $\Omega_\tau$, since we allow normal derivatives of $u_0$ and $v_0$ to be equal on $\partial \Omega$.

\begin{remark}
	As we have already mentioned, the WCP for $p<2$ and $\lambda \in (0, \lambda_1]$ is still unknown for nonnegative source and initial data. 
	However, if such version of the WCP is obtained, then the corresponding SCP under the assumptions of Theorems \ref{SCP} and \ref{SCP2} will be automatically satisfied. Indeed, Theorems \ref{SCP} and \ref{SCP2} can be applied to \eqref{l=01} and \eqref{l=02} with the source functions 
	$$
	\tilde{f} = \lambda |u|^{p-2} u + f 
	~\text{ and }~
	\tilde{g} = \lambda |v|^{p-2} + g,
	$$	
	where $0 \leq f \leq g$ a.e.\ in $\Omega_T$ (and hence $0 \leq \tilde{f} \leq \tilde{g}$ a.e.\ in $\Omega_T$). 
	In particular, if we know \textit{a priori} that $u \leq v$ in $\Omega$ for $p<2$ and $\lambda > 0$, then the assertions of Theorems \ref{SCP} and \ref{SCP2} hold true.
\end{remark}

In \cite[Remark 4.2]{BobkovTakac} it was indicated that, in general, the SCP may be violated for any $\lambda \in \mathbb{R}$ whenever $p>2$. 
However, with a help of the Hopf maximum principle, we have the following version of the SCP even in this case.
\begin{theorem}\label{SCP3}
	Let $p>2$ and $\lambda \geq 0$. Assume that for any $t \in (0, \bar{t}(v))$ it holds
	\begin{equation}\label{eq:SCP3}
	\frac{\partial v}{\partial \nu}(x,t) < 0
	\quad
	\text{for all}
	\quad
	x \in \partial \Omega.
	\end{equation}
	Then the assertions of \textnormal{Theorems \ref{SCP}} and \textnormal{\ref{SCP2}} remain valid. 
\end{theorem}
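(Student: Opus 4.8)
The plan is to derive \eqref{eq:con1} from a Hopf boundary point lemma together with a strong minimum principle applied to the difference $w=v-u$, mirroring the proofs of Theorems \ref{SCP} and \ref{SCP2}; the only genuinely new issue is that for $p>2$ the uniform parabolicity of the linearized operator is no longer automatic and must be supplied near $\partial\Omega$ by hypothesis \eqref{eq:SCP3}. First I would record the starting facts: since $0\le f\le g$ a.e.\ in $\Omega_T$, $0\le u_0\le v_0$ a.e.\ in $\Omega$, and $u,v$ are bounded, the Weak Comparison Principle (Theorem \ref{WCP2}, part \ref{WCP2:i} when $\lambda=0$ and part \ref{WCP2:ii} when $\lambda>0$) applied with $E=\Omega_T$ yields $w\ge0$ throughout $\Omega_T$; moreover $u,v\ge0$ by the WMP, $w=0$ on $\partial\Omega_T$, and $v>0$ in $\Omega_{\bar t(v)}$ by assumption.

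Next I would linearize. Using $u,v\ge0$ one writes $|v|^{p-2}v-|u|^{p-2}u=c\,w$ and $|\nabla_x v|^{p-2}\nabla_x v-|\nabla_x u|^{p-2}\nabla_x u=A\,\nabla_x w$, where
\[
A(x,t)=\int_0^1 |\xi_s|^{p-2}\Big(I+(p-2)\tfrac{\xi_s\otimes\xi_s}{|\xi_s|^2}\Big)\,ds,\qquad \xi_s=\nabla_x u+s\,\nabla_x w,
\]
and $c(x,t)=(p-1)\int_0^1(u+sw)^{p-2}\,ds\ge0$, so that $w$ is a nonnegative supersolution of the linear equation $\partial_t w-\mathrm{div}(A\,\nabla_x w)-\lambda c\,w=g-f\ge0$ in $\Omega_{\bar t(v)}$. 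By Remark \ref{rem:reg} the gradients of $u,v$ are bounded on each slab $\overline\Omega\times[\tau,T]$, so $A$ is symmetric, bounded and positive semidefinite. For $p<2$ its smallest eigenvalue is bounded below by $(p-1)M^{p-2}>0$ (with $M$ a gradient bound), which is precisely why Theorems \ref{SCP} and \ref{SCP2} hold unconditionally; for $p>2$, however, the smallest eigenvalue equals $|\xi_s|^{p-2}$, which degenerates wherever the gradients vanish. This degeneracy is the analytic source of the failure of the SCP recorded in \cite[Remark 4.2]{BobkovTakac}.

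Here \eqref{eq:SCP3} enters. On any slab $\overline\Omega\times[\tau',\tau'']\subset\overline\Omega\times(0,\bar t(v))$ the map $(x,t)\mapsto \tfrac{\partial v}{\partial\nu}(x,t)$ is continuous and, by \eqref{eq:SCP3}, strictly negative on the compact set $\partial\Omega\times[\tau',\tau'']$; hence $|\nabla_x v|\ge\delta>0$ on a boundary collar $\mathcal N=\{x\in\Omega:\mathrm{dist}(x,\partial\Omega)<\rho\}$ for suitable $\rho,\delta$. Consequently $A$ is uniformly elliptic on $\mathcal N\times[\tau',\tau'']$ and the equation for $w$ is uniformly parabolic there with bounded coefficients. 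On this collar I would invoke the classical parabolic boundary point lemma for the nonnegative supersolution $w$ to obtain $\tfrac{\partial w}{\partial\nu}<0$ on $\partial\Omega_{\bar t(v)}$, i.e.\ $\tfrac{\partial v}{\partial\nu}<\tfrac{\partial u}{\partial\nu}$; the complementary inequality $\tfrac{\partial u}{\partial\nu}\le0$ is immediate from $u\ge0$ in $\Omega$ and $u=0$ on $\partial\Omega$.

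For the interior strict inequality I would run a continuation-in-time argument: put $T_*=\sup\{t\le\bar t(v):w>0\text{ in }\Omega_t\}$, which is positive either because $u<v$ in $\Omega_\tau$ (hypothesis of Theorem \ref{SCP}) or, in the setting of Theorem \ref{SCP2}, because \eqref{eq:as1} combined with the boundary nondegeneracy just established forces $w>0$ on a short initial slab up to the boundary. If $T_*<\bar t(v)$, continuity gives an interior point $(x_*,T_*)$ with $w(x_*,T_*)=0$, whence $\nabla_x w(x_*,T_*)=0$ and $\nabla_x v(x_*,T_*)=\nabla_x u(x_*,T_*)$; wherever this common value is nonzero the operator is locally uniformly parabolic and the strong minimum principle propagates the zero backward, contradicting $w>0$ in $\Omega_{T_*}$. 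The main obstacle I anticipate is exactly the exceptional case in which $w$ first vanishes at an interior critical point of $v$, where $A$ genuinely degenerates: this delicate step I would handle by restricting the minimum principle to the open set $\{\nabla_x v\neq0\}$ and excluding first contact on the critical set by a separate continuity argument, so that, together with the boundary analysis above, one recovers $0\le u<v$ in $\Omega_{\bar t(v)}$ and the full conclusion \eqref{eq:con1}.
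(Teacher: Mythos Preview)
Your collar analysis is essentially the paper's: hypothesis \eqref{eq:SCP3} substitutes for the Hopf estimate that comes for free when $p<2$, yielding $|\nabla_x v|\ge\eta_1>0$ on $\mathcal O_\delta^\tau$, hence uniform parabolicity of the linearized operator there and the local SMP (Lemma \ref{scpboundary}) for $w=v-u$.

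The genuine gap is in the interior. You try to run the linearized strong minimum principle on all of $\Omega_{\bar t(v)}$, see the degeneracy of $A$ at critical points of $v$, and then propose to ``exclude first contact on the critical set by a separate continuity argument.'' That last sentence is not a proof, and there is no reason it should become one: nothing you have written prevents the first interior zero of $w$ from occurring exactly where $\nabla_x v=0$, and on that set the linearized equation carries no spatial information. This is precisely the mechanism behind the counterexamples to the SCP for $p>2$ recorded in \cite[Remark 4.2]{BobkovTakac}, so a soft continuity argument cannot be expected to succeed.

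The paper avoids the interior degeneracy altogether by a different device. Once the collar SMP gives $u<v$ in $\mathcal O_\delta^\eta$, fix $\delta_1\in(0,\delta)$ and use compactness to get $\alpha>0$ with $u+\alpha\le v$ on the parabolic boundary of $(\Omega\setminus\mathcal O_{\delta_1})\times[\eta,\bar t(v)-\eta]$. Since $\Delta_p$ is translation invariant and $s\mapsto\lambda s^{p-1}$ is nondecreasing for $\lambda\ge0$, the shifted function $u+\alpha$ satisfies
\[
\partial_t(u+\alpha)-\Delta_p(u+\alpha)-\lambda(u+\alpha)^{p-1}
= f+\lambda u^{p-1}-\lambda(u+\alpha)^{p-1}\le f\le g,
\]
so the \emph{weak} comparison principle (Theorem \ref{WCP2}\ref{WCP2:ii}) yields $u+\alpha\le v$ on the entire interior region. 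No linearization is used away from the boundary, so the degeneracy never enters. This shift-and-apply-WCP trick is the idea your plan is missing.
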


\medskip
The rest of the article is organized as follows. 
In Section \ref{sec:WCP}, we prove Theorems \ref{WCP2}, \ref{SMP}, \ref{SMP2}, and \ref{HMP}.
In Section \ref{sec:nonunieq}, we give two counterexamples to the maximum and comparison principles in the case $p<2$ and $\lambda > 0$.
Finally, Section \ref{sec:SCP} is devoted to the proofs of Theorems \ref{SCP}, \ref{SCP2}, and \ref{SCP3}.

\section{Weak Comparison and Strong Maximum Principles}\label{sec:WCP}

We start with the 
\textbf{proof of Theorem \ref{WCP2}}.
Consider the function $(u-v)_+ \stackrel{\textrm{def}}{=} \max\limits\{u - v, 0\}$.
Since we do not know a priori that $(u-v)_+$ is an admissible test function for \eqref{com1} and \eqref{com2}, we apply the approach from \cite[Lemma 3.1, Chapter VI]{dibenedetto1993} based on the Steklov averages. 
Define the Steklov averages of a function $w$ by
$$
w_h(x,t) = 
\left\{
\begin{aligned}
&\frac{1}{h} \int_t^{t+h} w(x, \tau) \, d\tau, &&t \in [0, T-h],\\
&0, &&t > T-h,
\end{aligned}
\right.
$$
where $h \in (0, T)$. 
First, arguing as in \cite[Remark 1.1, Chapter II]{dibenedetto1993}, it can be seen that the definition of the weak solution $u$ of \eqref{com1} is equivalent to the following one:
\begin{align}
\notag
\int_{\Omega \times \{t\}} \partial_t (u_h) \, \varphi \, dx &+ \int_{\Omega \times \{t\}}\left< \left(|\nabla_x u|^{p-2} \nabla_x u \right)_h, \nabla_x \varphi \right> dx \\
\label{eq:def:new1}
&= \lambda \int_{\Omega \times \{t\}} \left(|u|^{p-2} u\right)_h \varphi \, dx + \int_{\Omega \times \{t\}} f(x,t)_h \, \varphi \, dx
\end{align}
for all $h \in (0, T)$, $t \in (0, T-h)$, and $\varphi \in W_0^{1,p}(\Omega)$. The initial data is understood in the sense that $u_h(\cdot,0) \to u_0$ in $L^2(\Omega)$.
Analogous definition is also valid for the weak solution $v$ of \eqref{com2}.

Let us note that $((u-v)_h)_+ \in W_0^{1,p}(\Omega)$ for each $t \in [0, T-h)$.
Testing now \eqref{eq:def:new1} and the corresponding equation for $v$ by $((u-v)_h)_+$, and then subtracting them from each other, we get 
\begin{align}
\notag
&\int_{\Omega \times \{t\}} \partial_t((u-v)_h) ((u-v)_h)_+ \, dx \\
\notag
&+ \int_{\Omega \times \{t\}} \left< \left(|\nabla_x u|^{p-2} \nabla_x u -|\nabla_x v|^{p-2} \nabla_x v \right)_h, \nabla_x ((u-v)_h)_+ \right> dx  \\
\notag
&= \lambda \int_{\Omega \times \{t\}} \left(|u|^{p-2} u - |v|^{p-2} v\right)_h ((u-v)_h)_+ \, dx \\
\label{eq:wcp:new1}
&+ \int_{\Omega \times \{t\}} (f(x,t)-g(x,t))_h ((u-v)_h)_+ \, dx.
\end{align}
Let us integrate this equality over $(0,\tau)$, where $\tau \in (0, T-h)$. First, notice that 
$$
\partial_t((u-v)_h) ((u-v)_h)_+ = \frac{1}{2} \frac{\partial}{\partial t} \left[((u-v)_h)_+\right]^2
$$
which yields the integral
\begin{align}
\notag
\int_{\Omega_\tau} &\partial_t((u-v)_h) ((u-v)_h)_+ \, dx \, dt 
\\
\label{eq:wcp:new2}
&= \frac{1}{2} \left[ \int_\Omega \left(((u-v)_h)_+(x,\tau) \right)^2 \,dx  - \int_\Omega \left(((u-v)_h)_+(x,0) \right)^2 \,dx \right].
\end{align}
Moreover, since $u,v \in C([0,T] \to L^2(\Omega))$ and $u \leq v$ a.e.\ in $(\overline{\Omega} \times [0,T)) \setminus E$,  we have 
$$
\int_\Omega \left(((u-v)_h)_+(x,0) \right)^2 \,dx \to 0 
\quad \text{as } h \to 0.
$$
Letting now $h \to 0$ in \eqref{eq:wcp:new1} and \eqref{eq:wcp:new2}, we obtain
\begin{align*}
&\frac{1}{2}\int_\Omega \left((u-v)_+(x,\tau) \right)^2 \,dx 
+ 
\int_{\Omega_\tau} \left< |\nabla_x u|^{p-2} \nabla_x u -|\nabla_x v|^{p-2} \nabla_x v, \nabla_x (u-v)_+ \right> dx \, dt \\
&= 	\lambda \int_{\Omega_\tau} \left(|u|^{p-2} u - |v|^{p-2} v\right) (u-v)_+ \, dx \, dt
+ \int_{\Omega_\tau} (f(x,t)-g(x,t)) (u-v)_+ \, dx \, dt,
\end{align*}
which implies that
\begin{equation}\label{eq:wcp1x}
\frac{1}{2} \int_\Omega \left((u-v)_+(x,\tau) \right)^2 \,dx \leq 
\lambda \int_{\Omega_\tau} \left(|u|^{p-2} u - |v|^{p-2} v\right) (u-v)_+ \, dx \, dt.
\end{equation}

\ref{WCP2:i}
Assume that $p>1$ and $\lambda \leq 0$. Then \eqref{eq:wcp1x} implies that 
$$
\int_{\Omega} \left((u-v)_+(x, \tau)\right)^2 \,dx \leq 0
\quad 
\text{for all }
\tau \in (0, T), 
$$ 
which yields $u \leq v$ in $\Omega_T$, and therefore $u \leq v$ in $E$. 

\ref{WCP2:ii}
Assume that $p > 2$ and $\lambda > 0$. 
To estimate the right-hand side of \eqref{eq:wcp1x} we use the inequality
$$
(|a|^{p-2}a - |b|^{p-2}b)(a-b) \leq c_1 (|a|+|b|)^{p-2}|a-b|^2,
$$
where $c_1 > 0$ does not depend on $a$, $b \in \mathbb{R}$ (see, e.g., \cite[Appendix A, \S A.2]{takac2010}).
Recalling that $u, v \in L^\infty(E)$, we get
$$
\frac{1}{2} \int_{\Omega} \left((u-v)_+(x, \tau)\right)^2 \,dx \leq C_1 \int_0^\tau \int_{\Omega} \left((u -v)_+\right)^2 \, dx \, dt,
$$
where $C_1 = C_1(\lambda, \tau, u, v) \in (0, +\infty)$ is some constant.
Therefore, Gronwall's inequality implies that $\int_{\Omega} \left((u-v)_+(x, \tau)\right)^2 \,dx \leq 0$ for all $\tau \in (0, T)$, and hence $u \leq v$ in $E$.
\qed

\bigskip
Let us turn to the \textbf{proof of Theorem \ref{SMP}}.
Note that assertion \ref{SMP:ii} is the classical linear case, see, e.g., \cite{ProtterWeinberger} and \cite{smaller}. Moreover, assertions \ref{SMP:i} and \ref{SMP:iii} of Theorem \ref{SMP} were proved in \cite[Theorem 1.1]{BobkovTakac} for $\lambda \leq 0$ assuming that $u(x, \cdot)$ is differentiable with respect to $t \in (0, T)$ for all $x \in \Omega$. 
First, we slightly modify the arguments from \cite[Theorem 1.1]{BobkovTakac} to prove assertion \ref{SMP:i} for all $\lambda \leq \lambda_1$ assuming only $u \in C^{1,0}(\Omega_T)$. 

Since $u_0 \geq 0$ and $u = 0$ on $\partial \Omega_T$, we have $u \geq 0$ in $\Omega_T$ by \cite[Theorem 2.4]{BobkovTakac}.
Let $\Sigma$ be any connected component of the nonempty open set $\{(x,t)\in \Omega_T:~ u(x,t) > 0\} = \Omega_T \cap \{u > 0\} \neq \emptyset$.
If $\Sigma = \Omega_T$, the theorem is proved. So let $\Sigma \neq \Omega_T$ which entails $\partial \Sigma \cap \Omega_T \neq \emptyset$. 
Consequently, there exists an open ball $B_\rho(x^*, t^*) \subset \mathbb{R}^N \times \mathbb{R}$ such that 
$(x^*,t^*) \in \partial \Sigma \cap \Omega_T$ and $B_{2\rho}(x^*, t^*) \subset \Omega_T$.
Inspecting the open set $\Sigma \cap B_{\rho}(x^*, t^*)$, we observe that there is an open ball $B_R(x_0, t_0) \subset \Sigma \cap B_{\rho}(x^*, t^*)$ with $\partial B_R(x_0,t_0) \cap \partial \Sigma \neq \emptyset$, i.e., there is a point $(x_1,t_1) \in \partial B_R(x_0,t_0) \cap \partial \Sigma \subset \overline{B_\rho(x^*,t^*)} \subset \Omega_T$.
Taking a ball of smaller radius, if necessary, we may assume that $(x_1, t_1)$ is a unique zero  point of $u$ on $\partial B_R(x_0,t_0)$.

For $r \in (0,R)$ sufficiently small to be specified later, such that $B_r(x_1, t_1) \subset \Omega_T$, we  define the domain $D = B_R(x_0, t_0) \cap B_r(x_1, t_1)$. 
The set $\overline{B_R(x_0,t_0)} \setminus B_r(x_1, t_1)$ being compactly contained inside $\Sigma$, we have $\varepsilon \stackrel{\textrm{def}}{=} \inf\{u(x,t):\,  (x,t) \in \overline{B_R(x_0,t_0)} \setminus B_r(x_1, t_1)\} > 0$.
Now consider the function
\begin{equation}
\label{eq:v}
v(x,t)\stackrel{\textrm{def}}{=} \varepsilon \left( e^{-\alpha d(x,t)^2} - e^{-\alpha R^2} \right),
\end{equation}
where $\alpha > 0$ and $d(x,t) \stackrel{\textrm{def}}{=} \sqrt{|x-x_0|^2 + |t-t_0|}$ (compare with \cite[Chapter 3, Section 3]{ProtterWeinberger}).
It is easy to see that
$$
0< v \leq \varepsilon \mbox{ in } B_R(x_0, t_0),\quad v = 0 \text{ on } \partial B_R(x_0, t_0), \quad v < 0 \text{ in } \mathbb{R}^N \setminus \overline{B_R(x_0, t_0)},
$$
Moreover, by the definition of $\varepsilon$ we see that $v \leq u$ on $\overline{\Omega_T} \setminus D$ for every $\alpha > 0$.
Straightforward calculations (see also \cite[p.\ 225]{BobkovTakac}) yield
\begin{align*}\label{eq:smp2}
g(x,t) &\stackrel{\textrm{def}}{=} \partial_t v - \Delta_p v - \lambda |v|^{p-2} v \equiv 
-\left(\varepsilon e^{-\alpha d(x,t)^2}\right)^{p-1} 
\biggl\{ 
(p-1)(2\alpha |x-x_0|)^p \\
&\cdot 
\biggl[
1 + \lambda \left(1 - e^{-\alpha (R^2 - d(x,t)^2)}\right)^{p-1} \left[(p-1)(2\alpha |x-x_0|)^p\right]^{-1} 
\\
&- (p-2+N) \left[(p-1)(2\alpha)^{p-1} |x-x_0|^{p-2} \right]^{-1}
\biggr]
+ 2 \alpha \left(\varepsilon e^{-\alpha d(x,t)^2}\right)^{2-p} (t - t_0)
\biggr\}
\end{align*}
for $(x,t) \in B_R(x_0, t_0)$.

Suppose that $x_0\neq x_1$. Then we are able to choose the radius $r\in (0,R)$ so small that $R \geq |x-x_0| > r$ holds for all $(x,t) \in \overline{D}$. Therefore, recalling that $1<p<2$, we have
\begin{align*}
&1 + \lambda \left(1 - e^{-\alpha (R^2 - d(x,t)^2)}\right)^{p-1} \left[(p-1)(2\alpha |x-x_0|)^p\right]^{-1} \\
&\,~- (p-2+N) \left[(p-1)(2\alpha)^{p-1} |x-x_0|^{p-2} \right]^{-1}\\
&\geq
1 - |\lambda| \left[(p-1)(2\alpha r)^p\right]^{-1} - (p-2+N) \left[(p-1)(2\alpha)^{p-1} R^{p-2} \right]^{-1} \geq \frac{1}{2}
\end{align*}
for a sufficiently large $\alpha > 0$. Taking $\alpha$ even larger, if necessary, we obtain
\begin{align*}
g(x,t) &\leq
-\left(\varepsilon e^{-\alpha d(x,t)^2}\right)^{p-1} 
\biggl\{ 
\frac{p-1}{2}(2\alpha r)^p \, - 2 \alpha \left(\varepsilon e^{-\alpha d(x,t)^2}\right)^{2-p} |t - t_0|
\biggr\} \leq 0
\end{align*}
for $(x,t) \in D$. Therefore $g \leq 0 \leq f$ in $D$. 

Consider the case $\lambda \leq 0$. Recalling that $v \leq u$ on $\overline{\Omega_T} \setminus D$, we apply Theorem \ref{WCP2}, we get $v \leq u$ in $D$.
On one hand, since $u \geq 0$ in $\Omega_T$,  $u(x_1,t_1) = 0$ at $(x_1,t_1) \in \Omega_T$, and $u \in C^{1}(\Omega\times \{t_1\})$, we see that $\nabla_x u(x_1,t_1) = 0$.
On the other hand, $v > 0$ in $D$, $v(x_1,t_1) = 0$, $v \in C^{1}(\Omega \times \{ t_1\})$, but  $\nabla_x v(x_1,t_1) \neq 0$, since
$$
\left<\nabla_x v(x_1,t_1), (x_1 - x_0)\right>_{\mathbb{R}^N} = 
-2\alpha \varepsilon |x_1-x_0|^2 e^{-\alpha R^2} < 0.
$$
This is a contradiction to $v \leq u$ near $x_1$ in $D$. Thus, we have $x_0 = x_1$, i.e., $|t_0-t_1| = R$. The last fact allows us to argue as in the proof of \cite[Lemma 9.10, Chapter 9, \S{B}, p.\ 86]{smaller} (or, equivalently, $(N+1)$-dimensional generalization of \cite[Lemma 2, Chapter 3, Section 2, p.\ 166]{ProtterWeinberger}) to derive that $u(\cdot,t_1)\equiv 0$ in $\Omega$.

Assume now that $\lambda \in (0, \lambda_1]$. Recall that $u \geq 0$ in $\Omega_T$. Considering the function $\tilde{f} = \lambda |u|^{p-2} u + f$, we get $\tilde{f} \geq 0$ a.e.\ in $\Omega_T$. Hence, applying the proof from above to problem \eqref{eq:P} with the source function $\tilde{f}$, we obtain again that $u(\cdot,t_1)\equiv 0$ in $\Omega$.

Finally, consider
\begin{equation}\label{eq:bart}
\bar{t}(u) \stackrel{\textrm{def}}{=} \inf \{t \in (0,T]:~ \exists \, x\in \Omega \,\text{ such that } u(x,t) = 0 \}
\end{equation}
and put $\bar{t} = T$ whenever $u > 0$ in $\Omega \times (0, T]$.
If $\bar{t}(u) = 0$, then there exists a sequence $\{t_n\}_{n \in \mathbb{N}}$ such that $t_n \to 0$ as $n \to +\infty$ and hence $\|u(\cdot, t_n)\|_{L^2(\Omega)} = 0$ for all $n \in \mathbb{N}$. 
This implies that $\|u_0\|_{L^2(\Omega)} = 0$, since $u \in C\left( [0,T] \to  L^2(\Omega) \right)$. 
However, it contradicts the assumption $u_0\not\equiv 0$ in $\Omega$. Thus, $\bar{t}(u) > 0$ and consequently $u > 0$ in $\Omega_{\bar{t}(u)}$.
It is not hard to see that definitions \eqref{eq:max} and \eqref{eq:bart} coincide.

\medskip
Let us now prove assertions \ref{SMP:iii} and \ref{SMP:iv} of Theorem \ref{SMP}.
From Theorem \ref{WCP2} we know that $u \geq 0$ in $\Omega_T$.
Suppose, by contradiction, that there exists $(x_0,t_0) \in \Omega_T$ with $u(x_0,t_0) = 0$.
To exclude this case, we construct an appropriate nonnegative subsolution to \eqref{eq:P} (different than \eqref{eq:v}) which is strictly positive at $(x_0,t_0)$, and apply the WCP to get a contradiction. 
Assume, without loss of generality, that $x_0 = 0$, and let $K_{R}$ be an open $N$-dimensional ball with radius $R$ centered at the origin, such that $\overline{K_{R}} \subset \Omega$.
Consider the function $w: K_{R} \times [0,T] \to \mathbb{R}$ given by
\begin{equation}\label{eq:subsol}
w(x,t) = C (R^2 - |x|^2)^m (T-t),
\end{equation}
where constants $C > 0$ and $m \geq 2$ will be specified later.
To avoid confusion with notations, let us denote the radial variable $s \stackrel{\textrm{def}}{=} |x|$. Since $w(x,t) = w(s,t)$, we have
$$
\Delta_p w \equiv (p-1) |w'_s|^{p-2} w''_{ss} + (N-1)s^{-1}|w'_s|^{p-2} w'_{s},
$$
and direct calculations imply
\begin{align*}
\partial_t w &- \Delta_p w - \lambda |w|^{p-2}w =
-C (R^2 - s^2)^m  \\
\cdot &\biggl(1- C^{p-2} (2m)^{p-1} s^{p-2} (R^2 - s^2)^{m(p-2)-p} \times \\
&\left(R^2(p+N-2) - ((2m-1)(p-1)+N-1)s^2\right) (T-t)^{p-1} \\
&+\lambda C^{p-2} (R^2 - s^2)^{m(p-2)} (T-t)^{p-1}\biggr).
\end{align*}
Choosing $m \geq \frac{p}{p-2}$ and recalling that $p > 2$, we see that all terms are uniformly bounded in $K_R \times [0,T]$.
Hence, taking $C>0$ small enough, we derive that $\partial_t w - \Delta_p w - \lambda |w|^{p-2}w \leq 0 \leq f$ in $K_R \times [0,T]$.
On one hand, since $w = 0$ on $\partial K_R \times [0,T]$, we have $w \leq u$ on $\partial K_R \times [0,T]$. On the other hand, since $\overline{K_R} \subset \Omega$ and $u_0 > 0$ locally uniformly in $\Omega$, we can find (if necessary) smaller $C>0$ to satisfy $w \leq u$ in $K_R \times \{0\}$. 
Therefore, applying Theorem \ref{WCP2}, we deduce that $w \leq u$ in $K_R \times [0,T]$. However,  $w(x_0,t_0) > 0$, which contradicts the assumption $u(x_0,t_0) = 0$.

\qed

\medskip
\textbf{Corollary \ref{SMPcor}} follows directly from the proof of assertion \ref{SMP:i} of Theorem \ref{SMP}. 

\medskip
Now we discuss the 
\textbf{proof of Theorem \ref{SMP2}}.
Assertion \ref{SMP2:i} of Theorem \ref{SMP2} follows again from the proof of assertion \ref{SMP:i} of Theorem \ref{SMP}. Assertion \ref{SMP2:ii} can be found in \cite{ProtterWeinberger} and \cite{smaller}.

Let us prove assertions \ref{SMP2:iii} and \ref{SMP2:iv}.
Assume that $u(x_0,t_0) = 0$ and, without loss of generality, $x_0 = 0$.
Suppose, by contradiction, that there exists $t_1 \in (0, t_0)$ with $u(x_0, t_1) > 0$. 
Thus, due to the continuity of $u$, there exists $N$-dimensional open ball $K_R$ centered at $x_0 = 0$ such that $\overline{K_R} \subset \{(x,t_1) \in \Omega_T:~ u(x,t_1) > 0\}$.
Considering the function $w: K_R \times [t_1, T] \to \mathbb{R}$ defined by \eqref{eq:subsol}, 
we argue as in the proof of assertions \ref{SMP:iii} and \ref{SMP:iv} of Theorem \ref{SMP} to derive that $w \leq u$ in $K_R \times [t_1, T]$, which leads to a contradiction.
\qed

\medskip
The Hopf Maximum Principle stated in \textbf{Theorem \ref{HMP}} \ref{HMP:i} can be proved using the same arguments as in the proof of assertion \ref{SMP:i} of Theorem \ref{SMP} by considering the subsolution \eqref{eq:v} in ball $B_R(x_0, t_0) \subset \{(x,t)\in \Omega_T:~  u(x,t) > 0\} \neq \emptyset$ which touches $\partial\Omega_T$ at the point $(x_1,t_1)$ (i.e., $t_1 = t_0$). 
Assertion \ref{HMP:ii} of Theorem \ref{HMP} can be found in \cite{ProtterWeinberger} and \cite{smaller}.

\section{Nonuniqueness results}\label{sec:nonunieq}
In this section we give two counterexamples to the maximum and comparison principles for problem \eqref{eq:P}.

First we present a counterexample to the WCP in the case $p < 2$, 
$\lambda > 0$, and $f$ is a specially chosen sign-changing function in $\Omega_T$. 
For this end, we modify Example 2 from 
\cite[p.\ 148]{Fleckinger1997} on nonuniqueness of nontrivial weak solutions for an elliptic problem, in order to produce corresponding nonuniqueness for the parabolic problem \eqref{eq:P}. This will eventually lead to a violation of the WCP.

Let $p < 2$, $\lambda > 0$, and $\partial \Omega$ be of class $C^2$, for simplicity.
Consider the following elliptic problem:
\begin{equation}
\label{eq:D1}
\left\{ 
\begin{aligned}
- \Delta_p w &= \lambda |w|^{p-2} w - w + h(x), &x &\in \Omega, \\[0.4em]
w &= 0, &x &\in \partial\Omega,
\end{aligned} 
\right.
\end{equation}
where $h \in L^\infty(\Omega)$ will be specified later. 
We assume, without loss of generality, that $0 \in \Omega$.
The corresponding energy functional is given by
$$
E_\lambda(w) = \frac{1}{p}  \int_{\Omega} |\nabla w|^p \,dx - 
\frac{\lambda}{p}  \int_{\Omega} |w|^p \,dx + 
\frac{1}{2}  \int_{\Omega} |w|^2 \,dx - 
\int_{\Omega} h(x)\, w \,dx.
$$
Note that problem \eqref{eq:D1} is supercritical whenever $p < \frac{2N}{N+2}$ and $N \geq 2$. However, $E_\lambda$ is well-defined and coercive on the reflexive Banach space $X = W_0^{1,p}(\Omega) \cap L^2(\Omega)$ endowed with the norm $\|\cdot\| = \|\cdot\|_{W_0^{1,p}(\Omega)} + \|\cdot\|_{L^2(\Omega)}$. Clearly, $E_\lambda$ is Fr\'echet differentiable on $X$. 
The coercivity on $X$ is guaranteed jointly by the first summand, $\frac{1}{p}  \int_{\Omega} |\nabla w|^p \,dx$, and the third summand, $\frac{1}{2}  \int_{\Omega} |w|^2 \,dx$. Indeed, the latter dominated both, the second and fourth summands, by H\"older's inequality.

Our aim is to find two different critical points of $E_\lambda$ in $X$. First we construct a critical point of $E_\lambda$ of saddle type for a special choice of $h$.
Fix some $\varepsilon_1 \in (0,1)$ such that the ball $B_{2\varepsilon_1}(0)$ is compactly contained in $\Omega$,
and let a function $w_0$ be defined as
\begin{equation*}
\left\{ 
\begin{aligned}
&w_0(x) = |x|^m 					&&\text{ for }~ |x| \leq \varepsilon_1 \\
&w_0(x) = 0							&&\text{ for }~ x \in \Omega \setminus B_{2\varepsilon_1}(0), \\
&w_0 \in C^2(\overline{\Omega}), 	&&\Delta_p w_0 \in C(\overline{\Omega}),
\end{aligned} 
\right.
\end{equation*}
where $m \geq N (2-p)^{-1}$ (this choice will be clear later). Such $w_0$ can be explicitly constructed inside the annulus $B_{2\varepsilon_1}(0) \setminus B_{\varepsilon_1}(0)$ as a polynomial of the radial variable $|x|$.
Define now $h \in C(\overline{\Omega})$ by
\begin{equation}
\label{g(x)}
h(x) = -\Delta_p w_0(x) - \lambda |w_0(x)|^{p-2} w_0(x) + w_0(x), \quad x \in \Omega.
\end{equation}
By construction, $w_0 \in X$ and satisfies \eqref{eq:D1} pointwise with the source function $h$ given by \eqref{g(x)}, and therefore $w_0$ is a critical point of $E_\lambda$. 
However, we claim that $E_\lambda$ does not attain its local minimum at $w_0$.
For this end, let us take any function $z \in C^2(\overline{\Omega})$ such that
\begin{equation*}
\left\{ \begin{aligned}
z(x) &= 1 					&&\text{ for }~ |x| \leq \varepsilon, \\
z(x) &= z(|x|) \in [0, 1] 	&&\text{ for }~ \varepsilon \leq |x| \leq \varepsilon_1, \\
z(x) &= 0 					&&\text{ for }~ x \in \Omega \setminus B_{\varepsilon_1}(0),
\end{aligned} \right.
\end{equation*}
where $\varepsilon \in (0, \varepsilon_1)$, and show that $E_\lambda(w_0 + tz) < E_\lambda(w_0)$ for $t > 0$ small enough. 
By the mean value theorem, for any $t > 0$ there exists $t_0 \in (0, t)$ such that
\begin{equation}\label{eq:taylor}
E_\lambda(w_0 + tz) =  E_\lambda(w_0) + 
t \left< E'_\lambda(w_0 + t_0 z), z \right>.
\end{equation}
Let us investigate the function $\zeta: (0, +\infty) \to \mathbb{R}$ defined as
$$
\zeta(t) 
=
\frac{1}{t} \left< E'_\lambda(w_0 + t z), z \right> \equiv 
\frac{1}{t} \left( \left< E'_\lambda(w_0 + t z), z \right> - \left< E'_\lambda(w_0), z \right> \right).
$$
It is not hard to see that $\zeta \in C(0,+\infty)$. Moreover, since $w_0$ and $z$ are radial in $B_{\varepsilon_1}(0)$, we have
\begin{align}
\notag
\zeta(t) =
\notag
&= \frac{N \omega_N}{t} \biggl(
\int\limits_{\varepsilon}^{\varepsilon_1}
r^{N-1} \left| w_0' + t z'\right|^{p-2} (w_0' + t z') z' \, dr - 
\int\limits_{\varepsilon}^{\varepsilon_1}
r^{N-1} \left| w'_0 \right|^{p-2} w'_0  z' \, dr
\biggr) \\
\notag
&- \frac{\lambda N \omega_N}{t} \biggl(
\int\limits_0^{\varepsilon_1}
r^{N-1} \left| w_0 + t z \right|^{p-2} (w_0 + t z) z \, dr - 
\int\limits_0^{\varepsilon_1}
r^{N-1}\left| w_0 \right|^{p-2} w_0  z \, dr
\biggr) \\
&+ 
N \omega_N \int\limits_0^{\varepsilon_1}
r^{N-1} z^2 \, dr 
\label{E0}
= N \omega_N \biggl( \widetilde{E}_\lambda(t) + \hat{E}_\lambda(t) + \int\limits_0^{\varepsilon_1} r^{N-1} z^2 \, dr \biggr),
\end{align}
where $\omega_N$ is the volume of a unit ball in $\mathbb{R}^N$,
\begin{align*}
\widetilde{E}_\lambda(t) &= \frac{1}{t} \biggl(
\int\limits_{\varepsilon}^{\varepsilon_1}
r^{N-1} \left| w'_0 + t z' \right|^{p-2} (w'_0 + t z') z' \, dr - 
\int\limits_{\varepsilon}^{\varepsilon_1}
r^{N-1} \left| w'_0 \right|^{p-2} w'_0  z' \, dr
\biggr) \\
&- \frac{\lambda}{t} \biggl(
\int\limits_{\varepsilon}^{\varepsilon_1}
r^{N-1} \left| w_0 + tz \right|^{p-2} (w_0 + tz) z\, dr - 
\int\limits_{\varepsilon}^{\varepsilon_1}
r^{N-1} \left| w_0 \right|^{p-2} w_0 z \, dr
\biggr),
\end{align*}
and
$$
\hat{E}_\lambda(t) =  - \frac{\lambda}{t} \biggl(
\int_0^{\varepsilon}
r^{N-1} \left| w_0 + t \right|^{p-2} (w_0 + t) \, dr - 
\int_0^{\varepsilon}
r^{N-1}\left| w_0 \right|^{p-2} w_0 \, dr
\biggr).
$$
Since $w_0(r) = r^m$ for $r \leq \varepsilon_1$, we get
\begin{equation}
\label{E1}
\lim_{t \to 0} \hat{E}_\lambda(t) = -\lambda (p-1) \int_0^{\varepsilon} r^{N - 1 + m(p-2)} \, dr  = -\infty
\end{equation}
provided $N - 1 + m (p-2) \leq -1$ (that is, $m \geq N (2-p)^{-1}$) and $\lambda > 0$.
On the other hand, 
\begin{equation}
\label{E2}
\widetilde{E}_\lambda(0) = \lim_{t \to 0} \widetilde{E}_\lambda(t) = (p-1) \int_{\varepsilon}^{\varepsilon_1} r^{N-1} \left( |w'_0|^{p-2} |z'|^2 - \lambda |w_0|^{p-2} z^2 \right) \, dr
\end{equation}
is finite.
Combining \eqref{E0}, \eqref{E1}, and \eqref{E2}, we obtain
\begin{align*}
&\lim_{t \to 0} \zeta(t) =N \omega_N \left(\widetilde{E}_\lambda(0) -\lambda (p-1) \int_0^{\varepsilon} r^{N-1+m(p-2)} \, dr + \int_0^{\varepsilon_1} r^{N-1} z^2 \, dr \right)= -\infty.
\end{align*}
Thus, substituting $\zeta(t_0)$ into \eqref{eq:taylor} and recalling that $\zeta(t_0)$ is continuous for $t_0 > 0$, we see that $E_\lambda(w_0 + tz) < E_\lambda(w_0)$ for sufficiently small $t > 0$, i.e., $E_\lambda(w_0)$ decreases in direction $z$, and hence $w_0$ is not the point of a local minimum of $E_\lambda$ in $X$.

On the other hand, since $E_\lambda$ is coercive and weakly lower semicontinuous on $X$, it possesses a global minimizer $w_1 \in X$ which becomes a second nontrivial weak solution of \eqref{eq:D1}. 
Note that $w_1 \in L^\infty(\Omega)$ (see, e.g., \cite[Th\'eor\`eme A.1, p.\ 96]{anane}), and therefore $w_1 \in C^{1+\alpha}(\overline{\Omega})$ for some $\alpha \in (0,1)$ (cf. \cite{lieberman}).

Let us show now that nonuniqueness for elliptic problem \eqref{eq:D1} causes a corresponding nonuniqueness for the following parabolic problem of the type \eqref{eq:P}:
\begin{equation}
\label{eq:Px}
\left\{ 
\begin{aligned}
\partial_t u - \Delta_p u &= \lambda |u|^{p-2} u + h(x) \, |v(t)|^{p-2} v(t), & (x,t) &\in \Omega_T, \\[0.4em]
u_0 (x) &= 0, & x &\in \Omega, \\[0.4em]
u(x,t) &= 0, & (x,t) &\in \partial \Omega_T,
\end{aligned} 
\right.
\end{equation}
where $h$ is the sign-changing source function defined by \eqref{g(x)} and $v(t)$ is a (nontrivial) positive solution of the Cauchy problem
\begin{equation}\label{Cauchy}
\left\{ 
\begin{aligned}
\partial_t v - |v|^{p-2} v &= 0, & t > 0, \\[0.4em]
v(0) &= 0,
\end{aligned} 
\right.
\end{equation}
where $1<p<2$, which is given by
\begin{equation}\label{Cauchy2}
v(t) = (2 - p)^{\frac{1}{2 - p}} \, t^{\frac{1}{2 - p}}.
\end{equation}
If we look for solutions of \eqref{eq:Px} in the form $u(x,t) = w(x) v(t)$, then
$$
u_0(x,t) = w_0(x) \, v(t)  \quad \mbox{and} \quad u_1(x,t) = w_1(x) \, v(t) 
$$
are two different solutions to \eqref{eq:Px} which implies the desired nonuniqueness. Thus, the WCP is violated, since $u_0 = u_1$ on the parabolic boundary of $\Omega_T$.
\qed

\medskip
Note that it is not possible to obtain a similar nonuniqueness result if $h \geq 0$ a.e.\ in $\Omega$ and $\lambda \leq  \lambda_1$.
First, under these assumptions, any weak solution $w$ of \eqref{eq:D1} is nonnegative. Indeed, testing \eqref{eq:D1} by $w_-$, we obtain
\begin{align*}
0 
= \left( \int_\Omega |\nabla w_-|^p \,dx -  
\lambda \int_\Omega |w_-|^p \,dx\right) + 
\int_\Omega |w_-|^2 \,dx 
&+
\int_\Omega h(x) w_- \,dx \geq 
\int_\Omega |w_-|^2 \,dx \geq
0.
\end{align*}
However, this is possible only if $w \geq 0$ a.e.\ in $\Omega$. 
If $h \equiv 0$ a.e.\ in $\Omega$, then it is not hard to observe that \eqref{eq:D1} has a trivial solution only. Assume that $h \not\equiv 0$ a.e.\ in $\Omega$. 
As was noted above, $w \in C^{1+\alpha}(\overline{\Omega})$ for some $\alpha \in (0,1)$, and hence $w > 0$ in $\Omega$, due to \cite{vazquez1984}. 
If we suppose that \eqref{eq:D1} has two (positive) solutions, then the D\'iaz-Sa\'a inequality (see \cite[Lemma 2]{diazsaa}) leads to a contradiction, and hence the desired uniqueness for \eqref{eq:D1} follows. 
Let us remark also that a counterexample similar to \eqref{eq:Px} cannot be applied to the case $p>2$, since the Cauchy problem \eqref{Cauchy} has a trivial solution only.

\medskip
Now we give a counterexample to all maximum and comparison principles in the case $p < 2$ and $\lambda > \lambda_1$ under the trivial source and initial data.
Consider the following particular case of \eqref{eq:P}:
\begin{equation}
\label{eq:P1x}
\left\{ 
\begin{aligned}
\partial_t u - \Delta_p u &= \lambda |u|^{p-2} u, & (x,t) &\in \Omega_T, \\[0.4em]
u_0 (x) &= 0, & x &\in \Omega, \\[0.4em]
u(x,t) &= 0, & (x,t) &\in \partial \Omega_T.
\end{aligned} 
\right.
\end{equation}
Let $v(t)$ be the solution \eqref{Cauchy2} of the Cauchy problem \eqref{Cauchy},
and let $w$ be a positive weak solution to the following logistic problem (that is, \eqref{eq:D1} with $h \equiv 0$ in $\Omega$):
\begin{equation*}\label{eq:D}
\left\{ \begin{aligned}
- \Delta_p w &= \lambda |w|^{p-2} w - w, &x &\in \Omega, \\[0.4em]
w &= 0 &x &\in \partial \Omega,
\end{aligned} 
\right.
\end{equation*}
see, e.g., \cite[Theorem 1.1, (e), (b), p.\ 947]{ilyasov}.
It is not hard to see that $u_\pm(x,t) = \pm w(x) \, v(t)$ is a pair of positive and negative solutions to \eqref{eq:P1x}, and $u \equiv 0$ is a trivial solution of \eqref{eq:P1x}. Thus, for $u_0 \equiv 0$ and $f \equiv 0$, both weak and strong forms of maximum and comparison principles for \eqref{eq:P} are violated.
\qed

\smallskip
The considered counterexamples indicate that the question about the validity of the WCP for $p < 2$, $\lambda >0$, and nontrivial nonnegative data remains open. 

\smallskip
Finally, we refer the interested reader to \cite{arrieta,bensid,dao2,dao3,dao4,deguchi,diaztello,feireisl,gianni} for the existence, uniqueness, and nonuniqueness results to parabolic problems with some other types of nonlinearity.

\section{Strong Comparison Principles}\label{sec:SCP}

In this section we prove the versions of the SCP given by Theorems \ref{SCP}, \ref{SCP2}, and \ref{SCP3}.

Recall that we consider two initial-boundary value problems of the type \eqref{eq:P}:
\begin{align}
\tag{\ref{l=01}}
\partial_t u - \Delta_p u = \lambda |u|^{p-2} u + f \mbox{ in } \Omega_T,~~ 
&u(x, 0) = u_0  \mbox{ in } \Omega,~~
u = 0 \mbox{ on } \partial \Omega_T,
\\[0.4em]
\tag{\ref{l=02}}
\partial_t v - \Delta_p v = \lambda |v|^{p-2} v + g \mbox{ in } \Omega_T,~~ 
&v(x, 0) = v_0  \mbox{ in } \Omega,~~
v = 0 \mbox{ on } \partial \Omega_T.
\end{align}
We assume that
$\Omega \in C^{1+\alpha}$ and satisfies the interior sphere condition, $\lambda \in \mathbb{R}$ is a constant, $0 \leq f \leq g$ a.e.\ in $\Omega_T$, and 
$0 \leq u_0 \leq  v_0$  a.e.\ in $\Omega$.
Let $u$ and $v$ be bounded  weak solutions of \eqref{l=01} and \eqref{l=02}, respectively, and let $v$ be strictly positive in $\Omega_{\bar{t}(v)}$, where $\bar{t}(v) > 0$ is defined by \eqref{eq:max}. From Remark \ref{rem:reg} we know that $u, v \in C^{1+\beta, (1+\beta)/2}(\overline{\Omega}\times [\tau, T])$ for any $\tau \in (0, T)$.

For $\tau, \delta >0$ small enough we consider an open subset $\mathcal{O}_\delta^\tau$ of $\Omega_{\bar{t}(v)}$ given by
\begin{equation}\label{eq:O}
\mathcal{O}_\delta^\tau \stackrel{\textrm{def}}{=} \mathcal{O}_\delta \times  (\tau, \bar{t}(v)- \tau), \quad \text{where} \quad \mathcal{O}_\delta \stackrel{\textrm{def}}{=} \{x \in \Omega:~ \text{dist}(x, \partial \Omega) < \delta \}.
\end{equation}
To prove Theorems \ref{SCP}, \ref{SCP2}, and \ref{SCP3}, we follow the strategy of \cite{cuestatakac1998}: show first that the SCP holds in  $\mathcal{O}_\delta^\tau$, then  extend it to the whole of $\Omega_{\bar{t}(v)}$.

\begin{lemma}
	\label{scpboundary}
	Let $p<2$ and $\lambda = 0$.
	For any $\tau \in (0, \bar{t}(v)/2)$ there exists $\delta > 0$ such that for every connected component $\Sigma$ of $\mathcal{O}_\delta^\tau$, the equality $u(x_0, t_0) = v(x_0, t_0)$ for some $(x_0, t_0) \in \Sigma$ implies $u \equiv v$ in $\Sigma \cap \{ t \leq t_0 \}$.
\end{lemma}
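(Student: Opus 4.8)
The plan is to reduce the nonlinear comparison to a \emph{linear} parabolic strong maximum principle, using the boundary layer $\mathcal{O}_\delta$ precisely to guarantee that the linearized operator is uniformly parabolic with bounded coefficients. First I set $w := v - u$; by Theorem \ref{WCP2}\ref{WCP2:i} (applicable since $\lambda = 0 \le 0$) we have $w \ge 0$ in $\Omega_T$. Subtracting the equations for $u$ and $v$ and writing, for $\xi_s := \nabla_x u + s\,\nabla_x w = (1-s)\nabla_x u + s\,\nabla_x v$,
\[
|\nabla_x v|^{p-2}\nabla_x v - |\nabla_x u|^{p-2}\nabla_x u = \Big( \int_0^1 |\xi_s|^{p-2}\big(I + (p-2)\,\widehat{\xi_s}\otimes\widehat{\xi_s}\big)\,ds \Big)\nabla_x w =: A(x,t)\,\nabla_x w,
\]
where $\widehat{\xi_s} = \xi_s/|\xi_s|$, I obtain that $w$ is a nonnegative weak supersolution of the linear divergence-form equation
\[
\partial_t w - \mathrm{div}\big(A(x,t)\nabla_x w\big) = g - f \ge 0 .
\]
Each pointwise matrix $|\xi_s|^{p-2}(I + (p-2)\widehat{\xi_s}\otimes\widehat{\xi_s})$ has eigenvalues $(p-1)|\xi_s|^{p-2}$ and $|\xi_s|^{p-2}$, so controlling $A$ reduces to bounding $\int_0^1 |\xi_s|^{p-2}\,ds$ from above and below.

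Next I fix the boundary layer. Since $v > 0$ in $\Omega_{\bar t(v)}$ and $\Omega$ satisfies the interior sphere condition, Theorem \ref{HMP}\ref{HMP:i} gives $\partial v/\partial\nu < 0$ at every point of $\partial\Omega\times\{t\}$ for $t \in (\tau, \bar t(v)-\tau)$. By the $C^{1+\beta,(1+\beta)/2}$ regularity of $v$ up to the lateral boundary (Remark \ref{rem:reg}) and compactness of $\partial\Omega\times[\tau,\bar t(v)-\tau]$, this yields a uniform bound $|\nabla_x v| \ge c > 0$ on that set, and hence $|\nabla_x v| \ge c/2$ throughout $\mathcal{O}_\delta^\tau$ once $\delta$ is chosen small enough. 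The same regularity gives $|\nabla_x u|, |\nabla_x v| \le C$ on $\overline{\Omega}\times[\tau,T]$.

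With these bounds I would show that $A$ is uniformly elliptic and bounded on $\mathcal{O}_\delta^\tau$. Ellipticity is the easy part: since $1 < p < 2$ and $|\xi_s| \le C$, we have $|\xi_s|^{p-2} \ge C^{p-2}$, so the smallest eigenvalue of $A$ is at least $(p-1)C^{p-2} > 0$. The genuine difficulty is the upper bound, where the $p<2$ singularity of the coefficient at $\nabla = 0$ must be tamed, and this is exactly where $|\nabla_x v| \ge c/2$ enters. Writing $\xi_s = \nabla_x v - (1-s)\nabla_x w$, one has $|\xi_s| \ge |\nabla_x v| - |\nabla_x w| \ge c/4$ whenever $|\nabla_x w|$ is small, while for large $|\nabla_x w|$ one uses the elementary estimate $|\xi_s| \ge |\nabla_x w|\,|s - s_*|$, with $s_*$ the parameter realizing the distance of the origin to the segment, together with $\int_0^1 |s-s_*|^{p-2}\,ds \le \tfrac{2}{p-1}$. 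A short case analysis then yields $\int_0^1 |\xi_s|^{p-2}\,ds \le C'(c,p)$, hence $\|A\| \le C'$. This two-sided control of $\int_0^1 |\xi_s|^{p-2}\,ds$ is the main obstacle of the proof.

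Finally, $A$ being continuous, uniformly elliptic and bounded on $\mathcal{O}_\delta^\tau$, the strong maximum principle for divergence-form parabolic operators with bounded measurable coefficients (equivalently, the weak Harnack inequality) applies to the nonnegative supersolution $w$: on each connected component $\Sigma$, the interior vanishing $w(x_0,t_0)=0$ forces $w \equiv 0$ on $\Sigma \cap \{t \le t_0\}$, that is, $u \equiv v$ there, as claimed.
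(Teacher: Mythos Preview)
Your proposal is correct and follows essentially the same route as the paper: set $w=v-u\ge 0$, linearize via the mean value theorem to obtain $\partial_t w-\mathrm{div}(A\nabla_x w)=g-f\ge 0$, use the Hopf maximum principle and $C^{1+\beta}$ regularity to secure $|\nabla_x v|\ge c>0$ and $|\nabla_x u|,|\nabla_x v|\le C$ on $\mathcal{O}_\delta^\tau$, deduce uniform ellipticity and boundedness of $A$, and then invoke the linear parabolic strong maximum principle (via Moser/weak Harnack). The only cosmetic difference is that the paper handles the two-sided control of $\int_0^1|\xi_s|^{p-2}\,ds$ by quoting the ready-made inequality \cite[(A.6)]{takac2010}, namely $(\max_s|\xi_s|)^{p-2}\le \int_0^1|\xi_s|^{p-2}\,ds\le C_p(\max_s|\xi_s|)^{p-2}$, whereas your case analysis (small vs.\ large $|\nabla_x w|$, with the segment estimate $|\xi_s|\ge|\nabla_x w|\,|s-s_*|$) is precisely an ad hoc proof of that same bound.
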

\begin{proof}
	From Theorems \ref{WCP2}, \ref{SMP}, and \ref{HMP} we know that 
	$$
	0 \leq u \leq v 
	\text{ and }
	0 < v
	\text{ in } \Omega_{\bar{t}(v)}, 
	\quad 
	\frac{\partial v}{\partial \nu} \leq \frac{\partial u}{\partial \nu} \leq 0 
	\text{ and }
	\frac{\partial v}{\partial \nu} < 0
	\text{ on }
	\partial \Omega_{\bar{t}(v)}.
	$$
	Consequently, for any $\tau \in (0, \bar{t}(v)/2)$ there exist $\delta > 0$ and some constants $\eta_1, \eta_2 > 0$ such that $|\nabla_x u| \leq \eta_2$ and $\eta_1 \leq |\nabla_x v|  \leq \eta_2$ throughout $\mathcal{O}_\delta^\tau$.
	Thus, considering $w = v - u$, we have $w \geq 0$ in $\Omega \times \{0\}$, $w = 0$ on $\partial \Omega_{\bar{t}(v)}$, $w \geq 0$ in $\Omega_{\bar{t}(v)}$, and $w \in C^{1+\beta, (1+\beta)/2}(\overline{\Omega}\times [\tau, T])$ for any $\tau \in (0, T)$. Furthermore, subtracting \eqref{l=01} from \eqref{l=02}, we obtain that $w$ weakly satisfies the following linear parabolic inequality in $\mathcal{O}_\delta^\tau$:
	\begin{align}
	\label{eq:oper}
	\partial_t w - \text{div} (A(x, t) \nabla w) = g - f \geq 0.
	\end{align}
	Here, the $(N\times N)$-matrix $A(x, t)$ is obtained via the mean value theorem as
	\begin{align*}
	A(x,t) = \int_0^1 |\nabla_x((1-s) u + s v)|^{p-2}
	\mathbb{A}_p(\nabla_x((1-s) u + s v)) \, ds
	\end{align*}
	where 
	$$	
	\mathbb{A}_p(\vec{a}) \stackrel{\textrm{def}}{=} 
	\mathbb{I} + (p-2) \frac{\vec{a} \otimes \vec{a}}{|\vec{a}|^2} 
	\quad \text{for} \quad 
	\vec{a} \in \mathbb{R}^N \setminus \{\vec{0}\}
	$$
	is a symmetric, positive definite $(N\times N)$-matrix with the eigenvalues $1$ and $p-1$, $\mathbb{I}$ is the identity matrix, $\otimes$ denotes the tensor product, and integration in the definition of $A(x,t)$ is taken componentwise.
	
	Let us show that $A(x,t)$ forms a uniformly elliptic operator in $\mathcal{O}_\delta^\tau$. 
	Recall the inequalities 
	\begin{equation}\label{inq}
	\left(\max_{0 \leq s \leq 1} |a + s b|
	\right)^{p-2} \leq \int_0^1 |a + s b|^{p-2} \, ds
	\leq C_p \left(\max_{0 \leq s \leq 1} |a + s b|
	\right)^{p-2}
	\end{equation}
	from \cite[(A.6), p.\ 645]{takac2010}, which hold with some constant $C_p > 0$, for all $a,b \in \mathbb{R}^N$ with $|a|+|b| > 0$. Using \eqref{inq}, we derive 
	\begin{align*}
	\left(\max_{0 \leq s \leq 1} |\nabla_x((1-s) u + s v)|
	\right)^{p-2} &\leq \int_0^1 |\nabla_x((1-s) u + s v)|^{p-2} \, ds
	\\	
	&\leq C_p \left(\max_{0 \leq s \leq 1} |\nabla_x((1-s) u + s v)|
	\right)^{p-2}
	\end{align*}
	for any $(x, t) \in \mathcal{O}_\delta^\tau$, since $|\nabla_x u| + |\nabla_x(v-u)| > 0$ in $\mathcal{O}_\delta^\tau$. 
	Thus, estimating the quadratic form of $A(x,t)$ by that of $\mathbb{A}_p(\vec{a})$ (see also \cite[(A.10), p.\ 646]{takac2010}), we conclude that there exist $C_1, C_2 > 0$ such that 
	\begin{align*}
	C_1 |\xi|^2 \leq \left<A(x,t) \xi, \xi \right> \leq C_2 |\xi|^2
	\end{align*}
	for all $(x, t) \in \mathcal{O}_\delta^\tau$ and $\xi \in \mathbb{R}^N$.
	That is, the differential operator in \eqref{eq:oper} is uniformly parabolic in $\mathcal{O}_\delta^\tau$, since the corresponding elliptic operator is uniformly elliptic in this domain.
	Therefore, $w$ satisfies the SMP in $\Sigma$, as it follows from the combination of Harnack's inequality \cite{moserparabolic} for weak solutions of equation $\partial_t w - \text{div} (A(x, t) \nabla w) = 0$
	with the classical WCP. See also \cite[Corollary 3.5]{nazarural} for an explicit statement of the SMP for \eqref{eq:oper}. 
	Hence, if $w(x_0, t_0) = 0$ for some $(x_0, t_0) \in \Sigma$, then $w \equiv 0$ in $\Sigma \cap \{ t \leq t_0 \}$, which implies the desired result.
\end{proof}

\medskip
\noindent
\textbf{Proof of Theorem \ref{SCP}}. 
Assume that there exists $\tau > 0$ such that $u < v$ in $\Omega_\tau$. Taking any $\eta \in (0, \tau)$, Lemma \ref{scpboundary} guarantees that $u < v$ in every connected component $\Sigma$ of $\mathcal{O}_\delta^\eta \subset \Omega_{\bar{t}(v)}$ with some $\delta >0$.
Then for any $\delta_1 \in (0, \delta)$ there exists $\alpha > 0$ such that $u + \alpha \leq v$ on $\partial(\Omega \setminus \mathcal{O}_{\delta_1})\times [\eta, \bar{t}(v)-\eta]$.
On the other hand,
\begin{equation*}\label{tech}
\partial_t (u + \alpha) - \Delta_p (u + \alpha) = \partial_t u - \Delta_p u 
\leq 
\partial_t v - \Delta_p v.
\end{equation*}
Hence, Theorem \ref{WCP2} implies that $u+\alpha \leq v$ in $(\Omega \setminus \mathcal{O}_{\delta_1})\times [\eta, \bar{t}(v)-\eta]$. Consequently, $u < v$ in $\Omega \times (0, \bar{t}(v)-\eta]$.
Letting $\eta \to 0$, we conclude that $u < v$ in $\Omega_{\bar{t}(v)}$.
\qed

\medskip
By the same arguments as above we are able to prove Theorem \ref{SCP2}.
Consider the set
$$
\widetilde{\mathcal{O}}_\delta^\tau \stackrel{\textrm{def}}{=} \mathcal{O}_\delta \times  (0, \bar{t}(v) - \tau),
$$
where $\tau, \delta >0$ are sufficiently small and $\mathcal{O}_\delta$ is defined in \eqref{eq:O}.
First, we need the following local SCP.
\begin{lemma}
	\label{scpboundary2}
	Let $p<2$ and $\lambda = 0$.
	Assume, in addition to the assumptions on problems \eqref{l=01}, \eqref{l=02}, that $u_0, v_0 \in C^{1+\beta}(\overline{\Omega})$ satisfy also
	$$
	0 \leq u_0 \leq v_0
	\text{ in } \Omega, 
	\quad 
	\frac{\partial v_0}{\partial \nu} \leq \frac{\partial u_0}{\partial \nu} \leq 0 
	\text{ and }
	\frac{\partial v_0}{\partial \nu} < 0
	\text{ on }
	\partial \Omega.
	$$
	Then for any $\tau \in (0, \bar{t}(v))$ there exists $\delta > 0$ such that for every connected component $\Sigma$ of $\widetilde{\mathcal{O}}_\delta^\tau$, the equality $u(x_0, t_0) = v(x_0, t_0)$ for some $(x_0, t_0) \in \Sigma$ implies $u \equiv v$ in $\Sigma \cap \{ t \leq t_0 \}$.
\end{lemma}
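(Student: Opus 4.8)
The plan is to reproduce verbatim the strategy behind Lemma \ref{scpboundary}, the only genuinely new point being that the additional hypotheses on the initial data allow the whole argument to be carried down to the initial slice $t=0$, which is exactly what enlarges $\mathcal{O}_\delta^\tau$ to $\widetilde{\mathcal{O}}_\delta^\tau$. Since now $u_0, v_0 \in C^{1+\beta}(\overline{\Omega})$, the second part of Remark \ref{rem:reg} upgrades the regularity to $u, v \in C^{1+\beta,(1+\beta)/2}(\overline{\Omega_T})$, i.e.\ up to $t=0$; in particular $\nabla_x u$ and $\nabla_x v$ are continuous on the compact set $\overline{\mathcal{O}_\delta} \times [0, \bar{t}(v)-\tau]$. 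As before I set $w = v - u \geq 0$, which weakly satisfies the linear inequality \eqref{eq:oper} with the matrix $A(x,t)$ built via the mean value theorem, and the goal is to certify that this operator is uniformly parabolic on the enlarged strip $\widetilde{\mathcal{O}}_\delta^\tau$, after which the linear parabolic SMP (Harnack's inequality combined with the classical WCP) forces $w \equiv 0$ on $\Sigma \cap \{t \leq t_0\}$ in each connected component $\Sigma$.

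The core of the work is to secure the two-sided gradient control $|\nabla_x u| \leq \eta_2$ and $\eta_1 \leq |\nabla_x v| \leq \eta_2$ throughout $\widetilde{\mathcal{O}}_\delta^\tau$, now including the face $t=0$. For the lower bound on $|\nabla_x v|$ I would argue on the lateral boundary first: for each fixed $t \in (0, \bar{t}(v))$ one has $v(\cdot, t) > 0$ in $\Omega$, so Theorem \ref{HMP} yields $\partial v / \partial \nu < 0$, whence $|\nabla_x v| = |\partial v / \partial \nu| > 0$ on $\partial\Omega \times (0, \bar{t}(v)-\tau]$; at $t=0$ the hypothesis $\partial v_0 / \partial \nu < 0$ supplies the same strict inequality directly (here the HMP need not apply, since $v_0$ may vanish in the interior). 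Since $\partial\Omega \times [0, \bar{t}(v)-\tau]$ is compact and $|\nabla_x v|$ is continuous there, it is bounded below by some $\eta_1 > 0$; by continuity this lower bound persists on a full boundary layer $\mathcal{O}_\delta \times [0, \bar{t}(v)-\tau]$ once $\delta$ is chosen small enough. The upper bounds for $|\nabla_x u|$ and $|\nabla_x v|$ are immediate from the $C^{1}$-regularity up to $t=0$ on the compact closure.

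With these bounds in hand the verification of uniform ellipticity of $A(x,t)$ is word-for-word that of Lemma \ref{scpboundary}: the inequalities \eqref{inq} give $\left(\max_{0 \leq s \leq 1}|\nabla_x((1-s)u + s v)|\right)^{p-2}$ as a two-sided control of the $s$-integral, and since $|\nabla_x u| + |\nabla_x(v-u)| > 0$ on $\widetilde{\mathcal{O}}_\delta^\tau$, the quadratic form of $A(x,t)$ is pinched between $C_1|\xi|^2$ and $C_2|\xi|^2$. The operator in \eqref{eq:oper} is therefore uniformly parabolic on $\widetilde{\mathcal{O}}_\delta^\tau$, the linear SMP applies in each component $\Sigma$, and $w(x_0,t_0)=0$ forces $w \equiv 0$, i.e.\ $u \equiv v$, on $\Sigma \cap \{t \leq t_0\}$. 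I expect the delicate step to be precisely the uniform lower bound on $|\nabla_x v|$ at the initial slice: it is the sole place where the strict condition $\partial v_0 / \partial \nu < 0$ is indispensable, since without it the matrix $A$ could degenerate at points of $\{t=0\}$ where $\nabla_x v$ vanishes, and the entire reduction to a nondegenerate linear problem would collapse.
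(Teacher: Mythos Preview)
Your proposal is correct and follows exactly the paper's approach: the authors' proof is a two-sentence sketch noting that the $C^{1+\beta}$ initial data upgrade regularity to $\overline{\Omega_T}$ via Remark~\ref{rem:reg}, so that the Hopf-type lower bound on $|\nabla_x v|$ holds uniformly on $[0,\bar t(v)-\tau]$, after which one linearizes and argues as in Lemma~\ref{scpboundary}. You have simply unpacked this sketch, and you correctly pinpoint that the hypothesis $\partial v_0/\partial\nu<0$ is what replaces the HMP at the initial slice $t=0$.
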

\begin{proof}
	Note that, under the assumptions of the lemma, $u$ and  $v$ belongs to $C^{1+\beta, (1+\beta)/2}(\overline{\Omega}\times [0,T])$, see Remark \ref{rem:reg}.
	Therefore, $v$ satisfies the Hopf maximum principle uniformly on the time interval $[0, \bar{t}(v)-\tau)$ for any fixed $\tau > 0$ which allows us to linearize the $p$-Laplacian in $\widetilde{\mathcal{O}}_\delta^\tau$ as in the proof of Lemma \ref{scpboundary}, and obtain the desired result.
\end{proof}

\smallskip
\noindent
\textbf{Proof of Theorem \ref{SCP2}}.
Under the assumption \eqref{eq:as1}, 
Lemma \ref{scpboundary2} implies that $u < v$ in every connected component $\Sigma$ of $\widetilde{\mathcal{O}}_\delta^\tau$ where $\tau \in (0, \bar{t}(v))$ and $\delta >0$ is small enough.
Hence, the continuity of $u$ and $v$ implies that we can find sufficiently small $\eta>0$ and $\delta_1 \in (0, \delta)$ such that $u < v$ in $(\Omega \setminus \mathcal{O}_{\delta_1}) \times [0, \eta]$.
Thus, $u < v$ in $\Omega_\eta$, and we apply Theorem \ref{SCP} to conclude that $u < v$ in $\Omega_{\bar{t}(v)}$.
\qed

\smallskip
\noindent
\textbf{Proof of Theorem \ref{SCP3}}. 
Note that Lemmas \ref{scpboundary} and \ref{scpboundary2} mainly rely on the availability of the WCP and the Hopf maximum principle for solution $v$. 
In the case $p>2$ and $\lambda \geq 0$, we know that the WCP holds by Theorem \ref{WCP2} and the boundary estimate is imposed by the assumption \eqref{eq:SCP3} of the theorem. Therefore, following the same arguments as in the proofs of Lemmas \ref{scpboundary}, \ref{scpboundary2}, and Theorems \ref{SCP}, \ref{SCP2}, we obtain the desired result.
\qed

\section{Discussion}\label{sec:Discuss}

Our problem \eqref{eq:P} is somewhat related to
the degenerate diffusion problem (for $2 < p < \infty$)
with an inhomogeneous logistic reaction function treated in
{\sc S.\ Takeuchi} \cite[Sect.~3, p.~e1015]{takeuchi}:
\begin{equation}
\label{eq:EP}
\tag{$\mathrm{EP}$}
\left\{
\begin{alignedat}{2}
\partial_t u - \Delta_p u
& {} = \lambda\, |u|^{p-2} u (a(x) - u)  + f(x) \,,\quad
&& (x,t)\in \Omega_T \,,
\\[0.4em]
u(x,0)
& {} = u_0(x) \,,
&& x\in \Omega \,,
\\[0.4em]
u(x,t)
& {} = 0 \,,
&& (x,t)\in \partial\Omega_T \,.
\end{alignedat}
\quad
\right.
\end{equation}
Here,
\begin{math}
\Delta_p u\stackrel{\textrm{def}}{=}
\mbox{div}(|\nabla_x u|^{p-2} \nabla_x u)
\end{math}
is the $p$-Laplacian with the spatial gradient $\nabla_x u$, $p > 2$,
$\lambda\in \mathbb{R}_+ = [0,\infty)$, and both
$a,f\in L^{\infty}(\Omega)$ are some non\-negative functions,
$a\not\equiv 0$ in $\Omega$.
The corresponding semilinear problem with $p=2$ has been widely studied
in the literature, but the quasilinear analogue with $p\neq 2$
is less known; cf.\ \cite{derlet, takeuchi}.

Our methods developed in the present work for $2 < p < \infty$
(and $L^{\infty}(\Omega_T)$\--solutions)
are aplicable also to problem \eqref{eq:EP},
owing to the fact that the logistic reaction function
\begin{equation*}
g(x,\,\cdot\,)\colon \mathbb{R}\to \mathbb{R}
s \;\longmapsto\; \lambda\, |s|^{p-2} s (a(x) - s)
\end{equation*}
satisfies the one\--sided Lipschitz condition on
the non\-negative half\--line $\mathbb{R}_+$, by
\begin{equation*}
\frac{\partial g}{\partial s} (x,s) =
\lambda \, p\, |s|^{p-2}\left( \genfrac{}{}{}1{p-1}{p}\, a(x) - s\right)
\leq L_p\equiv \mathrm{const} < \infty
\end{equation*}
for a.e.\ $x\in \Omega$ and for all $s\in \mathbb{R}_+$.
More detailed weak comparison results for problem \eqref{eq:EP}
can be found in
{\sc A.\ Derlet} and {\sc P.\ Tak\'a\v{c}} \cite{derlet}.
Further results on the existence, uniqueness, and long\--time
asymptotic behavior of weak solutions are established in
\cite{derlet, takeuchi}.

\bigskip
\noindent
\textbf{Acknowledgements.}
The first author was supported by the project LO1506 of
the Czech Ministry of Education, Youth and Sports.
Both authors would like to express their sincere thanks
to an anonymous referee for his detailed questions and comments
concerning our {\rm Theorem 2.1 (WCP)} and also
for drawing our attention to Ref.\ \cite{takeuchi}.

\end{document}